\DeclareMathAlphabet{\mathpzc}{OT1}{pzc}{m}{it}
\begin{document}

\baselineskip=17pt

\pagestyle{headings}

\numberwithin{equation}{section}

\makeatletter                                                           

\def\section{\@startsection {section}{1}{\z@}{-5.5ex plus -.5ex         
minus -.2ex}{1ex plus .2ex}{\large \bf}}                                 


\pagestyle{fancy}
\renewcommand{\sectionmark}[1]{\markboth{ #1}{ #1}}
\renewcommand{\subsectionmark}[1]{\markright{ #1}}
\fancyhf{} 
\fancyhead[LE,RO]{\slshape\thepage}
\fancyhead[LO]{\slshape\rightmark}
\fancyhead[RE]{\slshape\leftmark}

\addtolength{\headheight}{0.5pt} 
\renewcommand{\headrulewidth}{0pt} 

\newtheorem{thm}{Theorem}
\newtheorem{mainthm}[thm]{Main Theorem}

\newcommand{\ZZ}{{\mathbb Z}}
\newcommand{\GG}{{\mathbb G}}
\newcommand{\Z}{{\mathbb Z}}
\newcommand{\RR}{{\mathbb R}}
\newcommand{\NN}{{\mathbb N}}
\newcommand{\GF}{{\rm GF}}
\newcommand{\QQ}{{\mathbb Q}}
\newcommand{\CC}{{\mathbb C}}
\newcommand{\FF}{{\mathbb F}}

\newtheorem{lem}[thm]{Lemma}
\newtheorem{cor}[thm]{Corollary}
\newtheorem{pro}[thm]{Proposition}
\newtheorem{proprieta}[thm]{Property}
\newcommand{\pf}{\noindent \textbf{Proof.} \ }
\newcommand{\eop}{$_{\Box}$  \relax}
\newtheorem{num}{equation}{}

\theoremstyle{definition}
\newtheorem{rem}[thm]{Remark}
\newtheorem*{D}{Definition}

\newcommand{\nsplit}{\cdot}
\newcommand{\G}{{\mathfrak g}}
\newcommand{\GL}{{\rm GL}}
\newcommand{\SL}{{\rm SL}}
\newcommand{\SP}{{\rm Sp}}
\newcommand{\LL}{{\rm L}}
\newcommand{\Ker}{{\rm Ker}}
\newcommand{\la}{\langle}
\newcommand{\ra}{\rangle}
\newcommand{\PSp}{{\rm PSp}}
\newcommand{\U}{{\rm U}}
\newcommand{\GU}{{\rm GU}}
\newcommand{\Aut}{{\rm Aut}}
\newcommand{\Alt}{{\rm Alt}}
\newcommand{\Sym}{{\rm Sym}}

\newcommand{\isom}{{\cong}}
\newcommand{\z}{{\zeta}}
\newcommand{\Gal}{{\rm Gal}}

\newcommand{\F}{{\mathbb F}}
\renewcommand{\O}{{\cal O}}
\newcommand{\Q}{{\mathbb Q}}
\newcommand{\R}{{\mathbb R}}
\newcommand{\N}{{\mathbb N}}
\newcommand{\E}{{\mathcal{E}}}

\newcommand{\DIM}{{\smallskip\noindent{\bf Proof.}\quad}}
\newcommand{\CVD}{\begin{flushright}$\square$\end{flushright}
\vskip 0.2cm\goodbreak}


\vskip 0.5cm

\title{On the minimal set for counterexamples to the local-global principle}
\author{Laura Paladino, Gabriele Ranieri, Evelina Viada\footnote{supported by the  Fond National Suisse}}
\date{  }
\maketitle

\vskip 1.5cm

\begin{abstract}
We prove that only for powers of  $2$ and $3$ could occur counterexamples to the local-global divisibility principle for elliptic curves defined over the rationals. For we refine our previous criterion for the validity of the principle.  We also give an example that shows that the assumptions of our criterion are necessary.
\end{abstract}

\section{Introduction}
This is the third of a series of articles on the \emph{Local-Global Divisibility Problem} in elliptic curves. In our previous articles we proved that for an elliptic curve $\mathcal{E}$  defined over a number field $ k $ with no $k$-torsion points of exact order $ p $ the Local-Global Principle holds for divisibility by powers  $p^n$, unless the field $ k $ contains a special real element. As a nice consequence  of the deep theorem of L. Merel \cite{Mer} we obtain that  there exists a constant ${C} ( k )$, depending only on the degree of  $ k $, such that  the local-global principle holds for divisibility by any power $p^n$ of  primes $p > {C} ( k )$.   The famous effective Mazur's theorem  \cite{Maz2}  proves that ${C} ( \QQ )=7$.  We deduce that counterexamples to the local global principle for elliptic curves over the rationals, can only occur for  ${S}_\Q=\{2,3,5,7\}$. Only for  powers of $2$ there are counterexamples over $\Q$. Unfortunately for powers of $3$ all known counterexamples are over non-trivial extensions of $\Q$, see \cite{DZ2} and \cite{Pal2}. The minimality of such a set is then a natural question. In this article we prove that $S_\Q$ is not minimal and it can be  shrunk to $\widetilde{S}_\Q=\{2,3\}$.   However the question  remains open for $3$.
\begin{thm}\label{minimale}
For any $p \geq 5$ and   any elliptic curve  $ \E $ defined over $\Q$, the local-global principle holds for divisibility by $p^n$.
\end{thm}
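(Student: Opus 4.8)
The plan is to reduce the local-global statement to a cohomological vanishing statement and then split into two cases according to the torsion of $\E$. We use the cohomological translation of Dvornicich and Zannier (see \cite{DZ2}): with $G_n=\Gal(\Q(\E[p^n])/\Q)$, the local-global principle holds for divisibility by $p^n$ for every $n$ provided that $H^1_{loc}(G_n,\E[p^n])$ — the subgroup of $H^1(G_n,\E[p^n])$ of classes whose restriction to every cyclic subgroup of $G_n$ is trivial — vanishes for all $n$; concretely, a class lies in $H^1_{loc}$ exactly when it has a representative cocycle $Z$ with $Z(g)\in(g-1)\E[p^n]$ for all $g\in G_n$. We split the proof according to whether $\E$ admits a $\Q$-rational point of exact order $p$.

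Suppose first that $\E(\Q)$ has no point of exact order $p$. Then the criterion of our previous articles applies: the principle holds for $p^n$ unless $\Q$ contains a certain special real element. For $p=5$ this element generates $\Q(\sqrt5)=\Q(\zeta_5+\zeta_5^{-1})$, and for $p=7$ it generates $\Q(\zeta_7+\zeta_7^{-1})$, a cubic field — in either case a proper extension of $\Q$, so the exceptional case cannot occur and the principle holds. (The analogous element for $p=3$ is $\zeta_3+\zeta_3^{-1}=-1\in\Q$, which is exactly why this method does not reach $p=3$.) By Mazur's torsion theorem every elliptic curve over $\Q$ falls into this case as soon as $p>7$, so this already re-proves the principle for all $p\geq11$.

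It remains to treat elliptic curves $\E/\Q$ carrying a $\Q$-rational point $P$ of exact order $p$; by Mazur this forces $p\in\{5,7\}$, and moreover $\E(\Q)$ has no point of order $p^2$. Here the previous criterion does not apply and must be refined. The rational cyclic subgroup $\langle P\rangle$ constrains $G_n$ to lie in the subgroup of $\GL_2(\ZZ/p^n\ZZ)$ of matrices that are upper triangular modulo $p$ with upper-left entry $\equiv1\pmod p$ (choose a basis with $P=p^{n-1}e_1$). We then exploit two rigidities: first, complex conjugation $c\in G_n$ satisfies $c^2=1$ and $\det c\equiv-1\pmod{p^n}$, so, since $p$ is odd, $c$ is conjugate to $\mathrm{diag}(1,-1)$, it fixes $P$, and (as $2$ is invertible modulo $p^n$) its image $(c-1)\E[p^n]$ is precisely its $(-1)$-eigenspace; second, the absence of a $\Q$-rational point of order $p^2$ further restricts the admissible $G_n$. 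Given a class in $H^1_{loc}(G_n,\E[p^n])$ with representative $Z$, the relation $Z(c)\in(c-1)\E[p^n]$ lets us subtract a coboundary and assume $Z(c)=0$, whence $Z(cgc^{-1})=cZ(g)$ for all $g\in G_n$. Combining this with $Z(g)\in(g-1)\E[p^n]$ and running through the possible shapes of $G_n$ permitted by the $p^2$-torsion constraint, one shows that $Z$ is a coboundary, so $H^1_{loc}(G_n,\E[p^n])=0$ for all $n$ and the principle follows; together with the previous case this covers every $\E/\Q$ and every $p\geq5$.

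The main obstacle will be this last computation. The standard way to annihilate $H^1_{loc}$ is to produce an element $g\in G_n$ with $g-1$ invertible, but the rational point $P$ imposes the eigenvalue $1$ on every element of $G_n$ modulo $p$, so no such $g$ exists — this is precisely the obstruction encoded by the special real element. One must instead play complex conjugation, the Borel structure, and the Mazur constraint on $p^2$-torsion against each other, uniformly over all levels $n$ and all admissible images $G_n$, which is the technical heart of the refinement. Finally, to show that these hypotheses cannot simply be omitted, we record an explicit elliptic curve producing a counterexample once an assumption of the refined criterion is dropped, completing the discussion.
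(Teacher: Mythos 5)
Your first case (no rational point of exact order $p$) is fine and matches the paper's use of its earlier criterion: over $\Q$ the field $\Q(\zeta_p+\overline{\zeta_p})$ is quadratic for $p=5$ and cubic for $p=7$, so the exceptional hypothesis cannot hold and the principle follows. The gap is in the second case, where $\E$ has a rational point of exact order $p$, $p\in\{5,7\}$. There you propose to kill $H^1_{\rm loc}(G_n,\E[p^n])$ using only the Borel shape of $G_n$, complex conjugation, and Mazur's statement that there is no rational point of order $p^2$. That last constraint is far too weak, and the step ``running through the possible shapes of $G_n$ permitted by the $p^2$-torsion constraint, one shows that $Z$ is a coboundary'' is precisely the point that fails. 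The residual configurations compatible with a nonzero local cohomology class are not killed by the absence of rational $p^2$-torsion: what they force (this is the content of the paper's Propositions \ref{pro14} and \ref{pro15}) is that $G_1$ is cyclic and diagonal with $k(\E[p])=k(\zeta_p)$, and that $G_2$ lies in a triangular group, i.e.\ $\E[p^2]$ has a Galois-stable \emph{cyclic subgroup} of order $p^2$ on which the action need not be trivial. Such a curve has a rational point of exact order $p$, no rational point of order $p^2$, and hence is perfectly consistent with Mazur's torsion theorem; complex conjugation (an involution of determinant $-1$) is likewise present in these configurations and does not rule them out.

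To exclude these configurations over $\Q$ one needs genuinely deeper arithmetic inputs, and this is how the paper proceeds: for $p=7$ it invokes Merel's theorem that $\Q(\E[7])\neq\Q(\zeta_7)$ for every elliptic curve over $\Q$ (ruling out the cyclotomic case $K_1=\Q(\zeta_7)$), and for $p=5$, where $\Q(\E[5])=\Q(\zeta_5)$ can occur, it shows that a nontrivial local class would produce a rational cyclic isogeny of degree $p^3=125$ between curves in the isogeny class of $\E$, which is excluded by Kenku's theorem that $Y_0(125)$ has no rational points. Neither of these facts follows from Mazur's classification of rational torsion, so your outline, as it stands, cannot close: you would either have to reprove something equivalent to Merel's non-cyclotomic theorem and Kenku's result on $X_0(125)$ by elementary Galois-cohomological manipulations (which the structure of the counterexample configurations shows is hopeless), or import them explicitly as the paper does.
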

 We already know that  for $p\ge 11$ the theorem holds. To exclude  $5$ and $7$,  we use two nice results. Merel \cite{Mer2} proved that  $\Q ( \E[7] ) \neq \Q ( \zeta_7 )$ for elliptic curves defined over $\Q$.
  Kenku \cite{Ken}, proved that  the modular curve $Y_0 ( 125 )$ has no rational points. In other words,
 there are no rational cyclic isogeny of degree $125$ between elliptic curves defined over $\Q$. Our main Theorem \ref{teo11} consists in proving that  if  $\Q ( \E[p] ) \neq \Q ( \zeta_p )$ or  if there exists a cyclic $\Q$-isogeny of order $p^3$, then the local-global principle holds for divisibility by $p^n$. We then apply Theorem \ref{teo11} with $p=7$ and $p=5$ to deduce Theorem \ref{minimale}.


Our approach realizes on the cohomological criterion introduced by R. Dvornicich and U. Zannier.
Denote by ${\mathcal{E}}[p^n]$  the $p^n$-torsion subgroup of ${\mathcal{E}}$ and by $K_n = k({\mathcal{E}}[p^n])$  the number field obtained by adding to $ k $ the coordinates of the $p^n$-torsion points of ${\mathcal{E}}$. Let $G_n = \Gal ( K_n / k )$. \\

{\bf Cohomological Criterion} (Dvornicich and   Zannier \cite{DZ} and \cite{DZ3}): {\it If the local cohomology $H^1_{\rm loc}(G_n,{\mathcal{E}}[p^n])$ is trivial, then there are no counterexamples to the local-global divisibility by $p^n$.  On the other hand, if the local cohomology $H^1_{\rm loc}(G_n,{\mathcal{E}}[p^n])$ is not trivial, then there is a counterexample to the local-global divisibility by $p^n$ in a finite extension $F$ of $k$ such that $F \cap K_n = k$}.\\

The local cohomology is the intersection of the kernels of all restrictions maps $H^1(G_n,{\mathcal{E}}[p^n]) \rightarrow H^1(C,{\mathcal{E}}[p^n])$ as $C$ varies over all cyclic subgroups of $G_n$.

Furthermore they proved:
\begin{thm}[Dvornicich and   Zannier \cite{DZ3}]\label{teo:teo11}  Suppose that $\E$ does not admit any $k$-rational isogeny of degree $ p $, then the local-global principle holds for $p^n$.
\end{thm}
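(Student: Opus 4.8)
The plan is to invoke the Cohomological Criterion, which reduces the statement to proving that $H^1_{\rm loc}(G_n,\E[p^n])=0$ for every $n\ge 1$. I would first rephrase the hypothesis: a $k$-rational isogeny of degree $p$ is the same thing as a $\Gal(\overline{k}/k)$-stable subgroup of order $p$ of $\E[p]$, i.e. a line of $\E[p]\isom\F_p^2$ fixed by the image $G_1\hookrightarrow\GL_2(\F_p)$ of the mod-$p$ Galois representation, so the hypothesis says exactly that $G_1$ acts irreducibly on $\E[p]$. I would then split into two cases via Dickson's classification of subgroups of $\GL_2(\F_p)$: if $p\mid|G_1|$, then $G_1$ has no normal Sylow $p$-subgroup (a normal one would force $G_1$ into a Borel subgroup, contradicting irreducibility), so $G_1$ contains two distinct subgroups of order $p$, and two such subgroups already generate $\SL_2(\F_p)$; hence either (i) $p\nmid|G_1|$, or (ii) $\SL_2(\F_p)\subseteq G_1$.

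In case (ii) I would climb the $p$-adic congruence tower: since $\ker\bigl(\GL_2(\Z/p^n\Z)\to\GL_2(\F_p)\bigr)$ is a $p$-group, the elementary unipotents $E_{12}(t)$ and $E_{21}(t)$ can be recovered inside $G_n$ from their reductions modulo $p$ by taking $p$-th powers and commutators, so $\SL_2(\Z/p^n\Z)\subseteq G_n$. In particular $-I\in G_n$, and $-I$ is central in $\GL_2(\Z/p^n\Z)$, hence in $G_n$; the $G_n$-equivariant endomorphism $m\mapsto(-I)m-m=-2m$ of $\E[p^n]$ is invertible when $p$ is odd, so by Sah's lemma it induces the zero map on $H^1(G_n,\E[p^n])$ and, being invertible, forces $H^1(G_n,\E[p^n])=0$; a fortiori the local cohomology vanishes. (The small primes $p=2,3$ would be treated separately, and are not needed for the applications made in this paper.)

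Case (i) is where the real work lies, and I expect it to be the \textbf{main obstacle}. Here $H_n:=\Gal(K_n/K_1)=\ker(G_n\to G_1)$ is a normal Sylow $p$-subgroup of $G_n$, so $|G_1|$ is prime to $p$, and inflation-restriction (together with the vanishing of $H^i(G_1,-)$ on $p$-torsion modules) gives $H^1(G_n,\E[p^n])\isom H^1(H_n,\E[p^n])^{G_1}$. One cannot conclude directly, since a $p$-group acting on $\Z/p^n\Z$ may have nontrivial local cohomology --- this is precisely the mechanism of Wang's counterexample to Grunwald's theorem --- so both the rank-$2$ structure of $\E[p^n]$ and the prime-to-$p$ group $G_1$ have to be used. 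The tool I would rely on is the following patching: for a locally trivial cocycle $Z$ the sets $A(\sigma):=\{a\in\E[p^n]:Z(\sigma)=(\sigma-1)a\}$ are all nonempty, the cocycle identity gives $A(\sigma)\cap A(\tau)\subseteq A(\rho)$ for every $\rho\in\langle\sigma,\tau\rangle$, so $Z$ is a coboundary as soon as $\bigcap_iA(\sigma_i)\ne\emptyset$ for one generating set $\{\sigma_i\}$ of $G_n$; moreover, if $\sigma$ has no eigenvalue $1$ on $\E[p^n]$ then $A(\sigma)$ is a single point, and for commuting $\sigma,\tau$ both without eigenvalue $1$ a short computation forces $A(\sigma)=A(\tau)$. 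Everything thus reduces to producing a generating set of $G_n$ made of elements without eigenvalue $1$, any two consecutive ones commuting; to do this one exploits that the Weil pairing puts $\mu_{p^n}\subseteq K_n$, so that $\det\colon G_n\to(\Z/p^n\Z)^*$ surjects onto $\Gal(k(\zeta_{p^n})/k)$, and one then runs through the short list of possibilities for $G_1$ in Dickson's classification (normaliser of a split or non-split Cartan subgroup, or an exceptional type), using the cyclotomic constraint to discard the most awkward configurations. Alternatively one can induct on $n$ through the exact sequence $0\to\E[p]\to\E[p^n]\to\E[p^{n-1}]\to0$, with base case $H^1_{\rm loc}(G_1,\E[p])=0$ coming again from the dichotomy above. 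Either way, once the generating set (equivalently, the inductive control) is secured, the patching argument shows that every locally trivial cocycle is a coboundary, and the theorem follows.
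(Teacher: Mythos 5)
First, note that the paper does not actually prove Theorem \ref{teo:teo11}: it is quoted from Dvornicich--Zannier \cite{DZ3}, with only the one-line indication that the absence of a $k$-rational $p$-isogeny forces the whole group $H^1(G_n,\E[p^n])$ to vanish, after which the Cohomological Criterion applies. Your reduction to vanishing of cohomology and your case (ii) are in that spirit and essentially sound for $p\geq 5$: if $G_1\supseteq \SL_2(\F_p)$ one lifts to $\SL_2(\Z/p^n\Z)\subseteq G_n$, and the central element $-I$ together with Sah's lemma kills $H^1(G_n,\E[p^n])$ (for this step you do not even need the full lifting, only a central element $\gamma$ with $\gamma-I$ invertible on $\E[p^n]$).

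The genuine gap is your case (i), $p\nmid\vert G_1\vert$, i.e. $G_1$ irreducible inside the normalizer of a split or nonsplit Cartan subgroup or of exceptional type. There you only state a programme: find a generating set of $G_n$ consisting of elements with no eigenvalue congruent to $1$, consecutive ones commuting, ``by running through Dickson's list and using the cyclotomic constraint'', or alternatively control the induction through $0\to\E[p]\to\E[p^n]\to\E[p^{n-1}]\to 0$. Neither step is carried out, and this is precisely where the content of the theorem lies: the kernel $H_n=\Gal(K_n/K_1)$ is a $p$-group acting on $\E[p^n]$, and such actions can perfectly well carry nontrivial locally trivial classes (this is the mechanism behind the example in Section \ref{sec6} of this paper, where $G_1$ is of order $2$); excluding them under the irreducibility hypothesis is exactly the nontrivial work done in \cite{DZ3}, and in the reducible situation it occupies the whole machinery of \cite{P-R-V2} and of the present paper. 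Two further points: your claim that $A(\sigma)$ is a single point requires the eigenvalues of $\sigma$ to be $\not\equiv 1 \pmod p$ (so that $\sigma-I$ is invertible on $\E[p^n]$), not merely different from $1$ in $\Z/p^n\Z$; and the theorem is stated for every prime, so $p=2,3$ (which you set aside both in the lifting lemma and in the $-2$-invertibility argument) cannot simply be omitted, even though they are not needed for the applications over $\Q$. As it stands, the proposal is a reasonable plan with one correct half, but not a proof of the statement.
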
 They show that if there are no such isogeny, then the cohomology is trivial, and consequently so is the local cohomology.

A theorem of Serre \cite{Ser} proves that such an isogeny exists only for $p\le c( k,\E )$,  where $c ( k, \E )$ is a constant depending on $ k $ and $\E$. The effective theorem of Mazur  \cite{Maz} proves  that rational isogenies exist only for $p\in \{2,3,5,7,11,13,17,19,37,43,67,163\}$, on  elliptic curves over $\Q$.
 Thus the local-global principle holds in general for $p^n$ with  $p>c(k,\E)$ and in  elliptic curves over $\QQ$ it suffices $p>163$.

In \cite{P-R-V2}, we give a stronger principle.
\begin{thm}\label{uno}  If there are no $k$-rational points of  exact prime order $p$  and $ k $ does not contain the field  $\Q ( \zeta_p + \overline{\zeta_p} )$, for any  primitive root of unity $\zeta_p$, then the local-global principle holds for $p^n$.\end{thm}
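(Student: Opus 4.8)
The plan is to use the Cohomological Criterion of Dvornicich and Zannier and show that $H^1_{\mathrm{loc}}(G_n,\E[p^n])=0$ for every $n\ge 1$. The first reduction is to a $p$-Sylow subgroup $P_n\le G_n$: if $p\nmid|G_n|$ then $H^1(G_n,\E[p^n])=0$ since $\E[p^n]$ is a $p$-group, while otherwise restriction $H^1(G_n,\E[p^n])\to H^1(P_n,\E[p^n])$ is injective (the composite of restriction with corestriction being multiplication by the prime-to-$p$ index $[G_n:P_n]$) and sends locally trivial classes to locally trivial classes, because every cyclic subgroup of $P_n$ is cyclic in $G_n$; so it suffices to prove $H^1_{\mathrm{loc}}(P_n,\E[p^n])=0$. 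For $n=1$ this is automatic, since $p$ exactly divides $|\GL_2(\F_p)|$, whence $P_1$ is cyclic and is itself one of the cyclic subgroups over which the intersection defining $H^1_{\mathrm{loc}}(P_1,\E[p])$ is taken; so only $n\ge 2$ remains. Finally, by Theorem~\ref{teo:teo11} we may assume $\E$ admits a $k$-rational isogeny of degree $p$, i.e.\ a $G_k$-stable line $L_1\subset\E[p]$; in an adapted basis the mod-$p$ representation has the shape $\rho_1(\sigma)\equiv\left(\begin{smallmatrix}\lambda(\sigma)&\ast\\0&\mu(\sigma)\end{smallmatrix}\right)$ with characters $\lambda,\mu\colon G_k\to\F_p^\times$ satisfying $\lambda\mu=\chi_p$, the mod-$p$ cyclotomic character (Weil pairing), and the hypothesis that $\E$ has no $k$-rational point of exact order $p$, i.e.\ $\E[p]^{G_k}=0$, forces $\lambda\neq 1$.

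Next I would reformulate the second hypothesis: the image of $\chi_p$ in $\F_p^\times\cong\Gal(\Q(\zeta_p)/\Q)$ is $\Gal(k(\zeta_p)/k)$, and the subgroup $\{\pm1\}$ there is generated by complex conjugation, whose fixed field is $\Q(\zeta_p+\overline{\zeta_p})$; hence $\Q(\zeta_p+\overline{\zeta_p})\not\subseteq k$ amounts to the existence of $\sigma_0\in G_k$ with $\chi_p(\sigma_0)\neq\pm 1$. The core is then to prove, for $n\ge 2$, that no image $G_n\le\GL_2(\Z/p^n\Z)$ which is upper triangular modulo $p$, has $\det=\chi_{p^n}$, and satisfies $\lambda\neq 1$ and $\chi_p(G_k)\not\subseteq\{\pm1\}$, can have a $p$-Sylow $P_n$ with $H^1_{\mathrm{loc}}(P_n,(\Z/p^n\Z)^2)\neq 0$; equivalently, one classifies the $p$-subgroups of $\GL_2(\Z/p^n\Z)$ supporting a nonzero locally trivial class and checks that all of them force $\lambda$ and $\mu$, hence $\chi_p=\lambda\mu$, into $\{\pm1\}$. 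Two ingredients feed in. First, a scalar argument that clears many configurations: if $G_n$ contains a central element acting as a scalar $\gamma$ with $\gamma\not\equiv 1\pmod p$, then $\gamma-1$ is invertible on $\E[p^n]$, and comparing $c(gh)$ with $c(hg)$ for a cocycle $c$ yields $c(h)=(h-1)\big[(\gamma-1)^{-1}c(g)\big]$, so $H^1(G_n,\E[p^n])=0$ outright. Second, an induction on $n$ along the tower $K_{n-1}\subset K_n$: the layer $H_n=\Gal(K_n/K_{n-1})$ lies in the elementary abelian group $\ker(\GL_2(\Z/p^n\Z)\to\GL_2(\Z/p^{n-1}\Z))$, and an inflation--restriction analysis reduces matters to the level-$(n-1)$ statement together with the behaviour of the local-triviality condition on $H_n$ and on the cyclic subgroups transverse to it, which one verifies annihilates the new contribution unless the image modulo $p^n$ degenerates to one of a short explicit list of shapes, of the type $\left\{\left(\begin{smallmatrix}\pm1&\ast\\0&\ast\end{smallmatrix}\right)\right\}$ or $\left\{\left(\begin{smallmatrix}\ast&\ast\\0&\pm1\end{smallmatrix}\right)\right\}$ modulo $p^n$.

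The hard part is precisely this classification and, above all, making the local-triviality condition interact cleanly with the congruence filtration, so that with only the two pieces of data $\lambda\neq 1$ and $\chi_p(G_k)\not\subseteq\{\pm1\}$ one can exclude every image that supports a nonzero locally trivial class. The delicacy is that such images genuinely do occur over fields that contain $\Q(\zeta_p+\overline{\zeta_p})$ --- they are exactly what lies behind the known counterexamples to the local-global principle --- so the argument has to be sharp enough to detect the single arithmetic fact, namely an element $\sigma_0$ with $\chi_p(\sigma_0)\neq\pm1$, that rules them out.
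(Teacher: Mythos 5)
Your proposal is a strategy outline, not a proof: the decisive step is exactly the part you defer. After the (correct) preliminary reductions --- Sylow injectivity of restriction, the case $n=1$, the reduction via Theorem~\ref{teo:teo11} to the existence of a rational $p$-isogeny with $\lambda\neq 1$, and the reformulation of $\Q(\zeta_p+\overline{\zeta_p})\not\subseteq k$ as $\chi_p(G_k)\not\subseteq\{\pm1\}$ --- everything of substance is contained in the sentences ``one classifies the $p$-subgroups \dots and checks'' and ``an inflation--restriction analysis \dots which one verifies annihilates the new contribution,'' and you yourself label this the hard part without carrying it out. That classification and the interaction of local triviality with the congruence filtration is precisely where the actual proof lives: in the source paper [PRV2] (whose key structural results are recalled in Section~\ref{sec2} here: the shape of $G_1$ when $H^1(G_n,\E[p^n])\neq 0$ from Lemma~\ref{lem:lem14}, the diagonal lift $\rho_n$ of order $\geq 3$ with no eigenvalue $1$, the generation of $G_n$ by ${\mathpzc{D}}_n,{\mathpzc{sU}}_n,{\mathpzc{sL}}_n$ and the vanishing statements of Lemma~\ref{lem:lem11}) the conclusion is reached by explicit cocycle computations of the kind carried out in Propositions~\ref{pro13}--\ref{pro15} of the present paper. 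None of that, nor an equivalent substitute, appears in your proposal, so there is a genuine gap rather than a different proof.

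Moreover, your opening reduction ``it suffices to prove $H^1_{\rm loc}(P_n,\E[p^n])=0$'' for a $p$-Sylow $P_n$ is a dead end, not merely an inefficiency. The two hypotheses of the theorem constrain only prime-to-$p$ data (the mod-$p$ characters $\lambda,\mu$ and the image of $\chi_p$), which is invisible to $P_n$, while $p$-groups in $\GL_2(\Z/p^n\Z)$ can perfectly well carry nonzero locally trivial classes: in the example of Section~\ref{sec61}, the restriction of the cocycle $Z$ to the $p$-Sylow $\langle\delta_2,\delta_3\rangle$ of $G_2$ is still locally trivial (cyclic subgroups of a subgroup are cyclic subgroups of the whole group) and is still not a coboundary, so $H^1_{\rm loc}(\langle\delta_2,\delta_3\rangle,(\Z/p^2\Z)^2)\neq 0$. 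Hence no argument confined to the Sylow subgroup can succeed; the hypotheses must be made to act on the full group $G_n$ through the prime-to-$p$ part (as the paper does via the diagonal element $\rho_n$ of order $\geq 3$). Your own later steps tacitly acknowledge this by reverting to conditions on $\det$, $\lambda$, $\mu$ of the whole image, so the plan is internally inconsistent as well as incomplete.
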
  We prove that under such hypothesis the local cohomology is trivial and we apply the cohomological criterion.

By the famous theorems of Merel  and  Mazur, we deduce that there is a constant $C ( k )$ depending only on the degree of $k$ such that, for $p > C ( k )$, the local-global principle holds for divisibility by  $p^n$.
In addition, $C ( \QQ )=7$. Thus only powers of $\{2,3,5,7\}$ might give counterexamples.\\

In the present paper we refine Theorem \ref{uno}. As a consequence we exclude $5$ and $7$.

\begin{thm}[Main Theorem]\label{teo11}
Let $ p $ be a prime number and let $ n $ be a positive integer.
Let $\E$ be an elliptic curve defined over a number field $ k $, which does not contain the field  $\Q ( \zeta_p + \overline{\zeta_p} )$.
Suppose that at least one of the following conditions holds:
\begin{enumerate}
\item  $\E$ does not admit any $k$-rational torsion point of exact order $ p $;
\item $k ( \E[p] ) \neq k ( \zeta_p )$;
\item There does not exist a cyclic $k$-isogeny of degree $p^3$, between two elliptic curves defined over $ k $ and $k$-isogenous to $\E$.
\end{enumerate}
Then, the local-global principle holds for divisibility by $p^n$.
\end{thm}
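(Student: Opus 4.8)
The plan is to use the cohomological criterion of Dvornicich and Zannier, so the heart of the matter is to show that under the hypotheses the local cohomology $H^1_{\rm loc}(G_n,\E[p^n])$ vanishes. Since condition (1) is already covered by Theorem \ref{uno}, I would concentrate on the case in which $\E$ does admit a $k$-rational point of exact order $p$; equivalently, $G_n$ acts on $\E[p]$ preserving a line, so the image of $G_1=\Gal(k(\E[p])/k)$ in $\GL_2(\F_p)$ lies in a Borel subgroup, and in fact in the subgroup of matrices $\begin{pmatrix}1 & * \\ 0 & \chi\end{pmatrix}$ where $\chi$ is the mod-$p$ cyclotomic character. First I would fix a basis $e_1,e_2$ of $\E[p^n]$ with $e_1$ of order $p$ generating the rational point's line (lifted suitably), so that $G_n$ embeds into the corresponding subgroup of $\GL_2(\Z/p^n)$. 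The hypothesis $k\not\supseteq\Q(\zeta_p+\overline{\zeta_p})$ forces the image of $\chi$ modulo $p$ to be a subgroup of $(\Z/p)^*$ not contained in $\{\pm 1\}$, hence of order divisible by some prime $\ell\ge 3$ dividing $p-1$; this is the same leverage used in \cite{P-R-V2} to kill cohomology, and I expect to recover and then push past the estimates there.

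The key step is a careful analysis of $H^1_{\rm loc}(G_n,\E[p^n])$ by dévissage along the filtration $\E[p]\subset\E[p^2]\subset\cdots\subset\E[p^n]$, reducing — via the inflation–restriction exact sequences and the structure of $H^1_{\rm loc}$ under such reductions, exactly as in our earlier papers — to understanding $H^1_{\rm loc}(G_1,\E[p])$ together with a bounded number of "extension" contributions controlled by the action on $\E[p^2]$ and $\E[p^3]$. Here is where conditions (2) and (3) enter. If $k(\E[p])\ne k(\zeta_p)$ then $G_1$ is strictly larger than the scalar-by-$\chi$ part, i.e.\ the unipotent upper-triangular part of the image is nontrivial; combined with the prime-to-$2$ element coming from $\chi$, a direct cocycle computation shows every class in $H^1(G_1,\E[p])$ killed on all cyclic subgroups is already zero. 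If instead $k(\E[p])=k(\zeta_p)$, then the nonsplit part of the action only appears at higher level, and a nonzero local cohomology class would produce, via the line it fixes at each level, a coherent tower of $k$-rational cyclic subgroups of orders $p,p^2,p^3$ on an elliptic curve $k$-isogenous to $\E$ — that is, a cyclic $k$-isogeny of degree $p^3$ — contradicting (3). So each of the three conditions independently forces $H^1_{\rm loc}(G_n,\E[p^n])=0$, and the cohomological criterion then gives the conclusion.

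The main obstacle I anticipate is the bookkeeping in the case $k(\E[p])=k(\zeta_p)$: one must show that a putative nonzero element of $H^1_{\rm loc}(G_n,\E[p^n])$ really does yield a \emph{cyclic} isogeny of degree exactly $p^3$ (not merely of degree $p$ or $p^2$, which would not be excluded), and that this isogeny can be taken between curves $k$-isogenous to $\E$ rather than just $\overline{k}$-isogenous. This requires tracking Galois-stability of the relevant flags at levels $p^2$ and $p^3$ simultaneously, and ruling out the "diagonalizable at level $2$ but not at level $3$" degeneracies by hand; I expect this to occupy the bulk of the argument, with the cases $p=5$ and $p=7$ of Theorem \ref{minimale} then following by invoking Kenku's result on $Y_0(125)$ and Merel's result that $\Q(\E[7])\ne\Q(\zeta_7)$.
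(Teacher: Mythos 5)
Your overall strategy (the Dvornicich--Zannier criterion, disposing of condition 1 by Theorem \ref{uno}, and reading conditions 2 and 3 as statements about the mod-$p$ and mod-$p^2$ image of Galois) matches the paper, but there are two genuine gaps. First, the proposed d\'evissage of $H^1_{\rm loc}(G_n,\E[p^n])$ along $\E[p]\subset\E[p^2]\subset\cdots$ down to $H^1_{\rm loc}(G_1,\E[p])$ plus ``extension contributions'' is not a valid reduction: the local condition is not preserved by the long exact sequences of the filtration, and the paper's own Section \ref{sec6} example shows exactly this failure ($G_1$ there is cyclic of order $2$, so $H^1_{\rm loc}(G_1,\E[p])=0$, yet $H^1_{\rm loc}(G_2,\E[p^2])\neq 0$). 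The actual work in the paper is done directly at level $n$: one first proves $H^1_{\rm loc}({\mathpzc{D}}_n,\E[p^n])=0$ and uses the vanishing over $\langle\rho_n,{\mathpzc{sU}}_n\rangle$ and $\langle\rho_n,{\mathpzc{sL}}_n\rangle$ to normalize any local class to a cocycle $Z$ with $Z_\tau=0$ on $\langle{\mathpzc{D}}_n,{\mathpzc{sU}}_n\rangle$ and $Z_{\tau_L}=(0,p^j\beta)$ (Proposition \ref{pro13}); then explicit local solvability at well-chosen elements such as $\tau_L\delta\tau_U^b$ and $\tau_L^{p^{h-1}}\delta\tau_U^b$ forces $\beta$ to die (Propositions \ref{pro14} and \ref{pro15}). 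None of this level-$n$ mechanism appears in your sketch, and your claim that a computation at level $1$ suffices for condition 2 is precisely what is false in general.

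Second, in the case $k(\E[p])=k(\zeta_p)$ you propose extracting ``a coherent tower of $k$-rational cyclic subgroups of orders $p,p^2,p^3$'', i.e.\ a cyclic order-$p^3$ subgroup, which would require control of $G_3$ that the hypotheses do not give; you yourself flag this as the main unresolved obstacle. The paper avoids it entirely: nonvanishing of the local cohomology yields only that $G_2$ is triangular (Proposition \ref{pro15}), hence a cyclic $G_2$-submodule $C_1\subset\E[p^2]$ of order $p^2$, while Proposition \ref{pro14} forces $G_1$ to be cyclic and diagonal, giving an independent $G_1$-submodule $C_2\subset\E[p]$ of order $p$ with $C_1\cap C_2=\{0\}$. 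The cyclic degree-$p^3$ isogeny is then manufactured as $\phi=\phi_1\circ\widehat{\phi_2}\colon\E_2\rightarrow\E_1$ with $\ker(\phi_i)=C_i$, cyclicity following from $C_1\cap C_2=\{0\}$ via $\ker(\phi)=\{P:\widehat{\phi_2}(P)\in\ker(\phi_1)\}$. This composition-with-the-dual trick is the missing idea that turns level-$2$ information into a degree-$p^3$ cyclic isogeny between curves $k$-isogenous to $\E$; without it (or the level-$n$ propositions above) the proposal does not close.
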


In the last section we construct an example which shows that the assumption that $k$ does not contain $\Q ( \zeta_p + \overline{\zeta_p} )$ is necessary in our Main Theorem.
To prove the theorem we must understand in details the structure of the local cohomology. In part we profit from our previous results. In addition we describe the cohomology of the diagonal subgroup and we construct special elements in the local cohomology (see sections \ref{sec4} and  \ref{sec5}). The details  of the proof of the main theorem are given in section \ref{strutturadim}.

\section{Previous results and notations}
\label{sec2}
Let $k$ be a number field and let $\mathcal{E}$ be an elliptic curve defined over $k$.
Let $ p $ be a prime.
For every positive integer $ n $, we denote by ${\mathcal{E}}[p^n]$  the $p^n$-torsion subgroup of ${\mathcal{E}}$ and by $K_n = k( {\mathcal{E}}[p^n] )$  the number field obtained by adding to $ k $ the coordinates of the $p^n$-torsion points of ${\mathcal{E}}$.
By Silverman \cite[Chapter III, Corollary 8.1.1]{Sil}, the field $ K_n $ contains a primitive $p^n$th root of unity $\zeta_{p^n} $.
Let $G_n = \Gal ( K_n / k )$.
As usual, we shall view  $\E[p^n]$ as $ \Z/p^n\Z \times \Z/p^n\Z $ and consequently we shall represent $G_n$ as a subgroup of $\textrm{GL}_2 ( \Z / p^n \Z )$, denoted by the same symbol.

We list the results of \cite{P-R-V2} that we shall use in the following sections and we fix our notations.

\begin{lem}[ \cite{P-R-V} Lemma 8]\label{lem:lem14}
Suppose  that $H^1 ( G_n, \E[p^n] ) \neq 0$.
Then either
\begin{equation*}
G_1=\langle \rho \rangle  \,\,\,\,\quad {\rm or} \,\,\,\,\quad
   G_1 =\langle \rho,  \sigma \rangle,
\end{equation*}
where $\rho = {\lambda_1\,\,\, 0 \choose 0 \,\,\, \lambda_2}$ is either the identity or a diagonal matrix with $\lambda_1 \neq \lambda_2 \mod (p)$ and $\sigma={1\,\,\, 1 \choose 0\,\,\, 1}$, in a suitable basis of $\E[p]$.
\end{lem}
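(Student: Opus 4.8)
The plan is to analyze the structure of $G_1 \subseteq \GL_2(\Z/p\Z)$ under the assumption that $H^1(G_1, \E[p]) \neq 0$ — or more precisely, that this is forced by the non-vanishing of $H^1(G_n, \E[p^n])$ via an inflation-restriction argument reducing to $G_1$. The starting point is a group-cohomology fact: if $p \mid |G_1|$ then the $p$-Sylow subgroup of $G_1$ is cyclic generated by a transvection (this is essentially because a non-cyclic $p$-subgroup of $\GL_2(\Z/p\Z)$ would be all of the unipotent upper-triangular group, which still has order $p$, so in fact every $p$-subgroup here is cyclic of order dividing $p$). So either $p \nmid |G_1|$, in which case $H^1(G_1, \E[p]) = 0$ by a standard order argument and the hypothesis fails, or $G_1$ contains an element conjugate to $\sigma = \begin{pmatrix} 1 & 1 \\ 0 & 1\end{pmatrix}$. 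We may then fix the basis so that $\sigma \in G_1$, and this fixes the invariant line $\la e_1 \ra$.

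Next I would show that the presence of $\sigma$ forces $G_1$ to normalize the line $L = \la e_1 \ra$ — equivalently $G_1$ is upper triangular in this basis. Indeed, if some $g \in G_1$ did not fix $L$, then $g\sigma g^{-1}$ would be a transvection with a different invariant line, and $\la \sigma, g\sigma g^{-1}\ra$ would be a $p$-subgroup of order $> p$ (it would generate $\SL_2$ or at least a non-abelian $p$-group), contradicting the Sylow structure above. So $G_1 \subseteq B$, the Borel of upper-triangular matrices. Write a general element of $G_1$ as $\begin{pmatrix} a & b \\ 0 & d\end{pmatrix}$. The main work is then: using $H^1(G_1, \E[p]) \neq 0$ again, rule out the case where $G_1$ contains an element with $a = d$ but $b \neq 0$ together with a "generic" diagonal part — i.e. show that modulo the subgroup generated by $\sigma$, the image of $G_1$ in the diagonal torus is either trivial or generated by a single diagonal matrix $\rho = \mathrm{diag}(\lambda_1, \lambda_2)$ with $\lambda_1 \not\equiv \lambda_2$; and then, after possibly changing basis within $B$ (conjugating by a unipotent or a diagonal element to clear the off-diagonal entry of $\rho$), arrange that $G_1 = \la \rho \ra$ or $\la \rho, \sigma\ra$.

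I expect the main obstacle to be the cohomological bookkeeping that eliminates the "bad" intermediate configurations: one must use that $H^1(G_1, \E[p])$ being nonzero constrains $G_1$ quite tightly — for a subgroup $G_1$ of the Borel, one computes $H^1$ via the exact sequence $0 \to L \to \E[p] \to \E[p]/L \to 0$ and the corresponding long exact sequence, and nonvanishing requires that the action on the diagonal pieces be suitably non-semisimple or have appropriate fixed points. The delicate point is showing that if the diagonal image of $G_1$ has order divisible by $p-1$ "in both coordinates independently" (i.e. is a large subgroup of the full torus), then $H^1$ already vanishes, so the diagonal image must be cyclic; and that once it is cyclic with distinct eigenvalues $\lambda_1 \neq \lambda_2$, one can conjugate $\rho$ to be genuinely diagonal while keeping $\sigma$ in standard form. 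I would cite the relevant $H^1$ computations for cyclic and diagonal groups from \cite{P-R-V} and \cite{P-R-V2} to shortcut these calculations, since the lemma is quoted from \cite{P-R-V} and its proof there presumably proceeds along exactly these lines.
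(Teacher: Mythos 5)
You should first note that this paper does not actually prove Lemma \ref{lem:lem14}: it is quoted verbatim from \cite{P-R-V}, Lemma 8, so there is no in-text proof to compare against; your proposal therefore has to stand on its own, and it has a genuine gap at its very first step. You reduce the hypothesis to $H^1(G_1,\E[p])\neq 0$ ``via inflation--restriction'', but $G_1$ is a \emph{quotient} of $G_n$, not a subgroup: inflation--restriction only gives an injection of $H^1(G_1,\E[p^n]^H)$ into $H^1(G_n,\E[p^n])$ (with $H=\Gal(K_n/K_1)$), which points the wrong way, and non-vanishing of $H^1(G_n,\E[p^n])$ does not imply non-vanishing of $H^1(G_1,\E[p])$. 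Indeed the lemma's own conclusion allows $G_1=\la\rho\ra$ with $p\nmid\vert G_1\vert$, in which case $H^1(G_1,\E[p])=0$ automatically; and Section \ref{sec6} of this very paper gives a concrete instance: there $\delta_2,\delta_3\equiv I \pmod p$, so $G_1=\la\rho\ra$ with $\rho$ the matrix with diagonal entries $1,-1$ has order $2$ and $H^1(G_1,\E[p])=0$, while $H^1_{\rm loc}(G_2,\E[p^2])\neq 0$ and hence $H^1(G_2,\E[p^2])\neq 0$. Consequently your dichotomy ``either $p\nmid\vert G_1\vert$ and the hypothesis fails, or $G_1$ contains a conjugate of $\sigma$'' proves too much: it would force $\sigma\in G_1$ in every case and exclude the cyclic case $G_1=\la\rho\ra$, which the lemma explicitly admits and which genuinely occurs.

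There are secondary problems as well. Two transvections with distinct fixed lines generate a subgroup containing $\SL_2(\F_p)$, which is not a $p$-group and whose $p$-Sylow still has order $p$, so the contradiction you claim with the Sylow observation does not arise; ruling out this configuration needs a cohomological input, and in the literature it comes from the Dvornicich--Zannier result (Theorem \ref{teo:teo11} here), whose proof shows that $H^1(G_n,\E[p^n])\neq 0$ forces a $k$-rational $p$-isogeny, i.e.\ that $G_1$ fixes a line and so lies in a Borel subgroup -- an argument carried out for $G_n$ acting on $\E[p^n]$, not for $G_1$ on $\E[p]$. Finally, the real content of the lemma -- that the diagonal part of $G_1$ is cyclic, generated by a matrix which is either the identity or has distinct eigenvalues (so that, e.g., non-identity scalars and non-cyclic images in the torus are excluded) -- is precisely what you leave as an expectation to be settled by ``cohomological bookkeeping'' or by citing \cite{P-R-V} itself, which is circular since that is the source of the statement. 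As written, the proposal establishes neither the reduction to $G_1$ nor the main structural claims, and any correct argument must work with the cohomology of $G_n$ on $\E[p^n]$ throughout.
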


By~\cite[Lemma 9]{P-R-V2}, we can choose a basis $\{ Q_1, Q_2 \}$ of $\E[p^n]$ in which a lift $\rho_n$ of $\rho$ to $G_n$ is diagonal and the order of $\rho_n$ is equals to the order of $\rho$. From now on we fix such a basis.
Then
\[
\rho_n =
\left(
\begin{array}{cc}
\lambda_{1, n} & 0 \\
0 & \lambda_{2, n}\\
\end{array}
\right),
\]
with $\lambda_{i, n} \equiv \lambda_i$ modulo $p$ .

This choice of a basis of $\E[p^n]$, determine  the  embedding of $G_n$ in $\GL_2 ( \Z / p^n \Z )$.
We consider the following subgroups of $G_n$:\\
\noindent    the  subgroup ${\mathpzc{sU}}_n$  of strictly upper triangular matrices in $G_n$;\\
    the subgroup ${\mathpzc{sL}}_n$ of strictly lower triangular matrices in $G_n$;\\
    the      subgroup ${\mathpzc{D}}_n$ of   diagonal   matrices in $G_n$.\\

We recall  useful  properties of such subgroups.

\begin{lem}[ \cite{P-R-V2} Proposition 11 and 16]\label{lem:lem11}
Assume that $ H^1 ( G_n, \E[p^n] )\not=0 $ and that the order of $\rho$ is at least $3$. Then:
\begin{enumerate}
\item $G_n$ is generated by matrices of ${\mathpzc{D}}_n$, ${\mathpzc{sU}}_n$ and ${\mathpzc{sL}}_n$;
\item $H^1_{{\rm loc}} ( \langle \rho_n, {\mathpzc{sL}}_n \rangle, \E[p^n] ) = 0$ and $H^1_{{\rm loc}} ( \langle \rho_n, {\mathpzc{sU}}_n \rangle, \E[p^n] ) = 0$.
\end{enumerate}
\end{lem}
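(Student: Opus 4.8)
\quad
Since $\mathrm{ord}(\rho)\ge 3$, the matrix $\rho$ is not the identity, so by Lemma~\ref{lem:lem14} we have $\rho={\lambda_1\;0\choose0\;\lambda_2}$ with $\lambda_1\not\equiv\lambda_2\pmod p$, and $G_1$ is $\langle\rho\rangle$ or $\langle\rho,\sigma\rangle$. A short check shows that, in the basis $\{Q_1,Q_2\}$ chosen so that $\rho_n$ (hence $\rho$) is diagonal, the transvection $\sigma$ is still strictly upper triangular (it fixes the $\lambda_1$-eigenline of $\rho$ and moves only along it, because $\lambda_1\not\equiv\lambda_2$); thus $G_1$ lies in the upper triangular Borel of $\GL_2(\F_p)$ and is generated by ${\mathpzc{D}}_1$ together with ${\mathpzc{sU}}_1$ (equal to $\langle\sigma\rangle$ or trivial), while ${\mathpzc{sL}}_1=1$. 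Let $P\trianglelefteq G_n$ be the preimage of the normal subgroup ${\mathpzc{sU}}_1$ of $G_1$ under reduction modulo $p$; this is a normal $p$-subgroup of $G_n$, being an extension of the $p$-group ${\mathpzc{sU}}_1$ by the $p$-group $\ker(G_n\to\GL_2(\F_p))$, and $G_n/P\cong G_1/{\mathpzc{sU}}_1$ is cyclic of order dividing $p-1$. Since the order of $\rho_n$ equals the order of $\rho$ (prime to $p$), the subgroup $\langle\rho_n\rangle$ meets $P$ trivially and has the correct order, whence
\[
G_n \;=\; P\rtimes\langle\rho_n\rangle .
\]
As $\rho_n\in{\mathpzc{D}}_n$, it then suffices to prove $P\subseteq\langle{\mathpzc{D}}_n,{\mathpzc{sU}}_n,{\mathpzc{sL}}_n\rangle$ in order to get statement~$1$.

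For statement~$1$ the plan is to filter $P$ by $P_i:=P\cap\bigl(I+p^i\,\mathrm{M}_2(\Z/p^n\Z)\bigr)$, so that $P/P_1\cong{\mathpzc{sU}}_1$ and, for $i\ge1$, $P_i/P_{i+1}$ embeds into $\mathrm{M}_2(\F_p)$ with $\langle\rho_n\rangle$ acting by conjugation. As this group has order prime to $p$, each such quotient is semisimple, and $\mathrm{M}_2(\F_p)$ decomposes $\langle\rho_n\rangle$-equivariantly as $(\text{diagonal})\oplus\F_pE_{12}\oplus\F_pE_{21}$, with characters $1$, $\chi:=\bar\lambda_1\bar\lambda_2^{-1}\neq1$ and $\chi^{-1}\neq1$; when $\chi^2\neq1$ these are pairwise distinct, so each $P_i/P_{i+1}$ is the direct sum of its intersections with the three lines. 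I would then prove by downward induction on $i$ that $P_i\subseteq\langle{\mathpzc{D}}_n,{\mathpzc{sU}}_n,{\mathpzc{sL}}_n\rangle$: given $h\in P_i$, one uses that products in $I+p^i\mathrm{M}_2$ reduce to sums modulo $p^{i+1}$ (because $2i\ge i+1$ for $i\ge1$), takes suitable products of the conjugates $\rho_n^k h\rho_n^{-k}$ to isolate the diagonal, upper-, and lower-triangular contributions of $h$ modulo $P_{i+1}$, matches these with elements of ${\mathpzc{D}}_n$, ${\mathpzc{sU}}_n$, ${\mathpzc{sL}}_n$, and invokes $P_{i+1}\subseteq\langle{\mathpzc{D}}_n,{\mathpzc{sU}}_n,{\mathpzc{sL}}_n\rangle$; the case $i=0$ amounts to showing ${\mathpzc{sU}}_n$ surjects onto ${\mathpzc{sU}}_1$. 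The main obstacle, and the technical heart of the argument, is exactly this matching step: a priori the semisimple splitting only produces elements of $P$ that are triangular \emph{modulo $p^{i+1}$}, and these must be upgraded to genuine diagonal/unipotent elements of $G_n$. This is also where the remaining hypotheses enter: the degenerate case $\chi^2=1$ (that is, $\chi=-1$, in which $\F_pE_{12}$ and $\F_pE_{21}$ carry the same character and the splitting breaks) is ruled out by using $H^1(G_n,\E[p^n])\neq0$ — a nontrivial scalar matrix in $G_n$, being central and $\not\equiv I\pmod p$, would kill $H^1$ by inflation--restriction, so $G_n$ has no such scalar — together with a short case analysis of the possible shapes of $G_n$ excluding the problematic configuration.

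For statement~$2$ the argument is short; I would treat ${\mathpzc{sL}}_n$, the case of ${\mathpzc{sU}}_n$ being symmetric. As a subgroup of the cyclic group $\{I+cE_{21}:c\in\Z/p^n\Z\}\cong\Z/p^n\Z$, the group ${\mathpzc{sL}}_n$ is cyclic, and it is normalized by $\rho_n$ since conjugation by a diagonal matrix preserves both strict lower-triangularity and $G_n$. Hence ${\mathpzc{sL}}_n$ is normal in $H:=\langle\rho_n,{\mathpzc{sL}}_n\rangle$, with quotient isomorphic to $\langle\rho_n\rangle$, cyclic of order prime to $p$. Since $\E[p^n]$ is a $p$-group and $[H:{\mathpzc{sL}}_n]$ is prime to $p$, we have $H^1\bigl(H/{\mathpzc{sL}}_n,\E[p^n]^{{\mathpzc{sL}}_n}\bigr)=0$, so the inflation--restriction sequence makes the restriction map
\[
H^1(H,\E[p^n])\longrightarrow H^1({\mathpzc{sL}}_n,\E[p^n])
\]
injective. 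But ${\mathpzc{sL}}_n$ is one of the cyclic subgroups entering the definition of the local cohomology, so every class in $H^1_{{\rm loc}}(H,\E[p^n])$ restricts to zero on it; by injectivity $H^1_{{\rm loc}}(H,\E[p^n])=0$, and likewise $H^1_{{\rm loc}}(\langle\rho_n,{\mathpzc{sU}}_n\rangle,\E[p^n])=0$.
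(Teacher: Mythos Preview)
The present paper does not prove this lemma at all: it is quoted from \cite{P-R-V2}, Propositions~11 and~16, with no argument given here. There is therefore no proof in this paper to compare your proposal against, and I can only assess your argument on its own.

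Your proof of part~2 is correct and complete. The three ingredients --- ${\mathpzc{sL}}_n$ is cyclic as a subgroup of $\{I+cE_{21}\}\cong\Z/p^n\Z$, it is normal in $H=\langle\rho_n,{\mathpzc{sL}}_n\rangle$ with quotient of order prime to $p$, and inflation--restriction then makes restriction to ${\mathpzc{sL}}_n$ injective on $H^1$ --- combine exactly as you say to kill $H^1_{\rm loc}(H,\E[p^n])$.

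For part~1 your outline is the natural one, but the step you flag as ``the main obstacle'' is a genuine gap and is not resolved by what you wrote. Averaging the conjugates $\rho_n^k h\rho_n^{-k}$ only produces elements of $G_n$ that are, say, strictly upper triangular \emph{modulo $p^{i+1}$}; it does not by itself produce any element of ${\mathpzc{sU}}_n$, i.e.\ an element of $G_n$ that is strictly upper triangular on the nose. The induction hypothesis $P_{i+1}\subseteq\langle{\mathpzc{D}}_n,{\mathpzc{sU}}_n,{\mathpzc{sL}}_n\rangle$ does not help either, because to descend from $P_i$ to $P_{i+1}$ you already need one genuine element of each type at level~$i$. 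What is missing is a separate argument that ${\mathpzc{D}}_n$, ${\mathpzc{sU}}_n$, ${\mathpzc{sL}}_n$ are themselves large enough --- concretely, that whenever $P_i/P_{i+1}$ meets the $\chi$-line $\F_pE_{12}$ nontrivially, the group ${\mathpzc{sU}}_n$ already contains an element $\equiv I+p^iE_{12}\pmod{p^{i+1}}$ (and similarly for the other pieces). This is where the real work lies, and your sketch does not supply it. One further remark on the case $\chi=-1$: the scalar you need must lie in $G_n$, not merely in $G_1$. This does hold, but it uses the specific choice of $\rho_n$: since $\rho_n$ has the same prime-to-$p$ order $m$ as $\rho$, its eigenvalues are the Teichm\"uller lifts of $\lambda_1,\lambda_2$; from $\lambda_2=-\lambda_1$ and $2\mid m$ one gets $\lambda_{2,n}=-\lambda_{1,n}$, so $\rho_n^2=\lambda_{1,n}^2 I$ is an honest scalar in $G_n$, nontrivial since $\mathrm{ord}(\rho)\ge3$ forces $\lambda_1^2\neq1$.
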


Finally, we define
\[
\tau_L =
\left(
\begin{array}{cc}
1 & 0 \\
p^j & 1 \\
\end{array}
\right)
\]
a generator of ${\mathpzc{sL}}_n$, where $j \geq 1$ is an integer, and
\[
\tau_U =
\left(
\begin{array}{cc}
1 & p^i \\
0 & 1 \\
\end{array}
\right)
\]
a generator of ${\mathpzc{sU}}_n$, where $i \in \N$.

\section{Structure of the proof of the Main Theorem }
 \label{strutturadim}
The proof of Theorem \ref{teo11} realises on   the two main Propositions  \ref{pro14} and \ref{pro15} presented  in section \ref{sec3}.
In the following remark we compare the hypotheses of the main theorem and of these propositions.
\begin{rem}
\label{ipotesi} Assume that  $H^1 ( G_n, \E[p^n] ) \neq 0$. Then by Lemma \ref{lem:lem14},    $G_1$ is either generated by $\rho$ or it is generated by $\rho$ and $\sigma$. \\
If   $\E$ admits a $k$-rational point of order $p$, then,
 in a suitable basis of $\E[p]$, we have
\[
\rho =
\left(
\begin{array}{cc}
1 & 0 \\
0 & \lambda_2\\
\end{array}
\right)
\]
with $\lambda_2 \in \Z / p \Z^\ast$.
Indeed,  $G_1 $ fixes the  $k$-rational points of order $p$.  If $G_1 = \langle \rho,\sigma \rangle$, then $\lambda_1=1$. If $G_1=\langle \rho \rangle$, then either $\lambda_1$ or $\lambda_2$ equal to $1$.
Eventually permuting the basis, we can always suppose that $\lambda_1=1$. Thus
\[
\rho_n =
\left(
\begin{array}{cc}
1 & 0 \\
0 & \lambda_{2, n}\\
\end{array}
\right)
\]
 with $\lambda_{2, n} \equiv \lambda_2 \mod ( p )$ and the first eigenvalue equals to $1$ because $\rho$ and $\rho_n$ have the same order.
If $k$ does not contain the field $\Q(\zeta_p+ \overline{\zeta}_p)$, then the order of $\rho$ is $\geq 3$. In fact, by Lemma \ref{lem:lem14}, the order of $\rho$ is the largest integer relatively prime to $p$ that divides $\vert G_1 \vert$.
In addition $[k ( \zeta_p ) : k] \mid \vert G_1 \vert$, $[k ( \zeta_p ) : k] \mid p-1$  and $[k ( \zeta_p ) : k] \geq 3$.
Thus $\rho$ has order $\geq 3$.
\end{rem}
 From Proposition \ref{pro14} below, we deduce the following theorem. By the mentioned work of Merel this excludes powers of $7$ as potential counterexamples over $\Q$.
 \begin{thm}
\label{sette}
Assume that:
\begin{itemize} \item[] $H^1 ( G_n, \E[p^n] ) \neq 0$,
\item[] $ k $
 does not contain the field  $\Q ( \zeta_p + \overline{\zeta_p} )$, \item[]  there is a $k$-rational point of  exact prime order $p$, and \item[] $k ( \E[p] ) \neq k ( \zeta_p ).$ \end{itemize} Then  the local cohomology $H^1_{{\rm loc}} ( G_n, \E[p^n] ) $ is trivial.
\end{thm}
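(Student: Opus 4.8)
The plan is to reduce the vanishing of $H^1_{\rm loc}(G_n,\E[p^n])$ to a statement about the diagonal subgroup $\mathpzc{D}_n$ together with the unipotent pieces $\mathpzc{sU}_n$ and $\mathpzc{sL}_n$, and then to exploit the extra hypothesis $k(\E[p])\neq k(\zeta_p)$ to control the diagonal contribution. By Remark \ref{ipotesi}, under the stated hypotheses we may work in a basis $\{Q_1,Q_2\}$ in which $\rho_n=\mathrm{diag}(1,\lambda_{2,n})$ with $\rho$ of order $\geq 3$, so Lemma \ref{lem:lem11} applies: $G_n$ is generated by $\mathpzc{D}_n$, $\mathpzc{sU}_n$, $\mathpzc{sL}_n$, and the local cohomology of $\langle\rho_n,\mathpzc{sL}_n\rangle$ and of $\langle\rho_n,\mathpzc{sU}_n\rangle$ already vanishes. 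The point is therefore to squeeze these facts together with the information that the image $G_1$ of $G_n$ on $\E[p]$ is \emph{strictly larger} than the group generated by $\rho$ (this is exactly what $k(\E[p])\neq k(\zeta_p)$ buys us, since $k(\zeta_p)$ is the fixed field of $\langle\sigma\rangle$ acting trivially on $\zeta_p$ — more precisely, it forces $G_1=\langle\rho,\sigma\rangle$ with $\sigma$ a nontrivial transvection, rather than $G_1=\langle\rho\rangle$ being diagonal).

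The main steps I would carry out are as follows. First I would take a cocycle $Z\in H^1_{\rm loc}(G_n,\E[p^n])$ and restrict it to the cyclic subgroups generated by $\rho_n$, by a generator $\tau_U$ of $\mathpzc{sU}_n$, and by a generator $\tau_L$ of $\mathpzc{sL}_n$; being locally trivial, $Z$ becomes a coboundary on each of these, so after subtracting a global coboundary I may normalize $Z(\rho_n)$, $Z(\tau_U)$ or $Z(\tau_L)$ to convenient values (this is the standard ``cleaning up'' in the Dvornicich–Zannier framework and is carried out in \cite{P-R-V2}). Second, using part (2) of Lemma \ref{lem:lem11}, the restriction of $Z$ to $\langle\rho_n,\mathpzc{sU}_n\rangle$ and to $\langle\rho_n,\mathpzc{sL}_n\rangle$ is a coboundary; combining the two normalizations, I expect to be able to arrange $Z$ to vanish on all of $\mathpzc{D}_n$, $\mathpzc{sU}_n$ and $\mathpzc{sL}_n$ simultaneously, \emph{provided} the two coboundary-parameters forced on the generators of the two unipotent groups are compatible. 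Third, since by Lemma \ref{lem:lem11}(1) these subgroups generate $G_n$, a cocycle vanishing on all three generating sets — once one checks it is consistent on the overlaps, i.e.\ on $\mathpzc{D}_n$ — must vanish identically, giving $Z=0$ in cohomology.

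The compatibility in the second step is where the hypothesis $k(\E[p])\neq k(\zeta_p)$ enters, and it is the heart of the argument. The obstruction to simultaneously trivializing $Z$ on $\mathpzc{sU}_n$ and on $\mathpzc{sL}_n$ lives in a quotient measured by the $\mathpzc{D}_n$-module structure of $\E[p^n]$; concretely, the normalization on the $\mathpzc{sU}_n$-side pins down the $Q_1$-component of $Z$ on the transvections and the $\mathpzc{sL}_n$-side pins down the $Q_2$-component, and these are glued by the action of $\rho$ — the eigenvalue $\lambda_2$ controls whether the gluing equation $(\lambda_2-1)x=$(something) is solvable. When $G_1=\langle\rho\rangle$ is purely diagonal there is genuine freedom and one can build a nonzero local class (this is precisely why counterexamples survive for $p=2,3$ and why the paper's final section needs the condition $k\not\supseteq\Q(\zeta_p+\overline\zeta_p)$); but when $\sigma\in G_1$, the transvection $\sigma$ imposes an \emph{additional} linear relation on the values $Z(\tau_U)$, and this relation, together with the reduction $\lambda_{2,n}\equiv\lambda_2\not\equiv1\pmod p$, kills the obstruction. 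So the hard part will be the precise bookkeeping of how the presence of $\sigma$ propagates up the tower from $\E[p]$ to $\E[p^n]$ — lifting the relation coming from $\sigma$ to a relation among the $p^n$-torsion values of $Z$ — and checking that it exactly cancels the potential local class coming from the diagonal part. Once that cancellation is established, triviality of $H^1_{\rm loc}(G_n,\E[p^n])$ follows formally, and the Dvornicich–Zannier cohomological criterion then yields the local–global principle for divisibility by $p^n$ in the situation of Theorem \ref{sette}.
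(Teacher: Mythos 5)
Your opening reduction is the same as the paper's, and it is fine: by Remark \ref{ipotesi} you may take $\rho_n$ diagonal with first eigenvalue $1$ and $\rho$ of order at least $3$, and the hypothesis $k(\E[p])\neq k(\zeta_p)$, combined with Lemma \ref{lem:lem14} and the Weil-pairing fact that an element of $G_1$ fixes $\zeta_p$ exactly when its determinant is $1$, rules out the case $G_1=\langle\rho\rangle$ (if $G_1$ were cyclic with $\lambda_1=1$, the identity would be its only determinant-one element, forcing $K_1=k(\zeta_p)$); hence $G_1=\langle\rho,\sigma\rangle$. Be careful, though, that your justification is stated a bit circularly (``$k(\zeta_p)$ is the fixed field of $\langle\sigma\rangle$'' presupposes $\sigma\in G_1$); the clean argument is the contradiction above. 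At this point the paper simply invokes Proposition \ref{pro14}, whose hypotheses ($H^1(G_n,\E[p^n])\neq 0$, order of $\rho\geq 3$, first eigenvalue $1$, $G_1$ not cyclic) are now all verified, and stops.

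The genuine gap is in the second half, where you attempt to re-derive the content of Propositions \ref{pro13} and \ref{pro14} and leave out precisely the decisive step. The normalizations you describe are Proposition \ref{pro13}: a locally trivial cocycle can be modified so that it vanishes on $\langle{\mathpzc{D}}_n,{\mathpzc{sU}}_n\rangle$ and $Z_{\tau_L}=(0,p^j\beta)$. The whole theorem then reduces to proving $p^j\beta\equiv 0 \bmod p^n$, and your proposal never exhibits the mechanism by which noncyclicity of $G_1$ forces this: the ``additional linear relation on $Z(\tau_U)$'' and the ``gluing equation $(\lambda_2-1)x=\dots$'' are not made precise, and you explicitly defer the cancellation check (``the hard part will be the precise bookkeeping''). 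In the paper this is exactly where the work lies: since $G_1$ is not cyclic, \cite{P-R-V2} (Lemma 14) gives $\tau_U={1\ \ 1\choose 0\ \ 1}$, one forms the diagonal element $\delta=\tau_U\tau_L\tau_U^{-(p^j+1)^{-1}}\tau_L^{-(p^j+1)}\in G_n$, and the local solvability of $Z_{\tau_L\delta\tau_U^b}=(0,p^j\beta)$ at the particular exponent $b\equiv -p^j(p^j+1)^{-2}$ produces two congruences that together force $p^j\beta\equiv 0$; nothing in your sketch constructs such an element or relation, so the vanishing is asserted rather than proved. A further inaccuracy: your claim that when $G_1=\langle\rho\rangle$ is diagonal ``one can build a nonzero local class'' is unsupported and is not what the paper says (under the present hypotheses that case is handled by Proposition \ref{pro15} together with the isogeny condition, and the nontrivial class of Section \ref{sec6} requires $k\supseteq\Q(\zeta_p+\overline{\zeta_p})$, which is excluded here). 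The quickest repair of your proof is to stop after your first step and cite Proposition \ref{pro14}, as the paper does.
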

\begin{proof}
In view of Remark \ref{ipotesi}, we can take $\lambda_1=1$. In addition $\rho$ has order at least $3$.

Suppose that $G_1$ is cyclic generated by $\rho$.
Since the first  eigenvalue of $\rho$ is $1$, the only element of $G_1$ with determinant $1$ is the identity.
The basic properties of the Weil pairing (see \cite[Chapter III.8]{Sil}) entail that for every $\tau \in G_1$, we have $\tau ( \zeta_p ) = \zeta_p^{{\rm det} ( \tau )}$.
Thus the unique element of $G_1$ that fixes $\zeta_p$ is the identity.
Since $k ( \zeta_p ) \subseteq K_1$, we get $K_1 = k ( \zeta_p )$, contradicting the hypothesis.
Thus $G_1$ is not cyclic.   By Proposition \ref{pro14}, we obtain $H^1_{{\rm loc}} ( G_n, \E[p^n] ) = 0$.
\end{proof}

\begin{cor}
For any elliptic curve $\E$ over the  rationals, there are no counterexample to the local-global divisibility by powers of $7$.
\end{cor}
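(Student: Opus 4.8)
The plan is to deduce the corollary from Theorem \ref{sette}, the Dvornicich--Zannier cohomological criterion, and Merel's theorem \cite{Mer2}, all applied with $k=\Q$ and $p=7$. First I would record the elementary fact that $[\Q(\zeta_7+\overline{\zeta_7}):\Q]=(7-1)/2=3$, so $\Q$ does not contain $\Q(\zeta_7+\overline{\zeta_7})$. This is the only hands-on computation in the whole argument, and it secures the standing hypothesis common to Theorem \ref{sette} and to Theorem \ref{uno}.

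By the cohomological criterion, it suffices to show that $H^1_{\rm loc}(G_n,\E[7^n])=0$ for every positive integer $n$. If $H^1(G_n,\E[7^n])=0$ this is automatic, since the local cohomology is a subgroup of $H^1(G_n,\E[7^n])$; so I may assume $H^1(G_n,\E[7^n])\neq 0$ and then split into two cases according to whether $\E$ admits a $\Q$-rational torsion point of exact order $7$.

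If $\E$ has no such point, then hypothesis $(1)$ of the Main Theorem \ref{teo11} holds (equivalently, Theorem \ref{uno} applies, using the degree computation above), and the local-global principle for divisibility by $7^n$ follows at once. If $\E$ does have a $\Q$-rational point of exact order $7$, then Merel's theorem \cite{Mer2} gives $\Q(\E[7])\neq\Q(\zeta_7)$; combined with $H^1(G_n,\E[7^n])\neq 0$, with $\Q\not\supseteq\Q(\zeta_7+\overline{\zeta_7})$, and with the existence of the rational $7$-torsion point, all four hypotheses of Theorem \ref{sette} are satisfied, and we conclude $H^1_{\rm loc}(G_n,\E[7^n])=0$. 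In either case the local cohomology vanishes for all $n$, and the cohomological criterion yields the corollary.

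There is no genuine obstacle here: the corollary is an assembly of inputs already cited, the essential one being Merel's result $\Q(\E[7])\neq\Q(\zeta_7)$, which supplies hypothesis $(4)$ of Theorem \ref{sette}. The only point requiring a little care is to verify that the case $H^1(G_n,\E[7^n])=0$ and the case with no rational $7$-torsion point are both covered separately, so that Theorem \ref{sette} is invoked only when all of its hypotheses genuinely hold.
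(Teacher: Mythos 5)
Your proposal is correct and takes essentially the same route as the paper's own proof: dispose of the case $H^1(G_n,\E[7^n])=0$, note $\Q\not\supseteq\Q(\zeta_7+\overline{\zeta_7})$, split on the existence of a rational point of exact order $7$, and apply Theorem \ref{uno} in one case and Merel's result together with Theorem \ref{sette} in the other, concluding via the Cohomological Criterion. No further comment is needed.
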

\begin{proof} If the global cohomology is trivial then also the local cohomology is trivial and by the Cohomological Criterion there are no counterexamples. We can then assume  $H^1 ( G_7, \E[7^n] ) \neq 0$. Of course $\Q$ does not contain the element $\zeta_7+ \overline{\zeta}_7$. If there are no rational points  of exact order $7$, then we can apply Theorem \ref{uno}. So we can suppose that  there is  a rational point  of exact order $7$.
 Merel \cite{Mer2} proved that  $\Q ( \E[7] ) \neq \Q ( \zeta_7 )$ for elliptic curves defined over $\Q$. Apply Theorem \ref{sette} and the Cohomological Criterion to conclude the proof.
\end{proof}
By Proposition \ref{pro15} below,  we deduce the following theorem. By the mentioned work of Kenku this exclude powers of $5$ as potential counterexamples over $\Q$.
\begin{thm}
\label{cinque}
Assume that: \begin{itemize} \item[] $H^1 ( G_n, \E[p^n] ) \neq 0$, \item[]  $ k $ does not contain the field  $\Q ( \zeta_p + \overline{\zeta_p} )$, \item[] there is a $k$-rational points of  exact prime order $p$,\item[]  $k ( \E[p] ) = k ( \zeta_p )$ and
\item[] there does not exist a cyclic $k$-isogeny of degree $p^3$, between two elliptic curves defined over $ k $ and $k$-isogenous to $\E$.
\end{itemize}  Then  the local cohomology $H^1_{{\rm loc}} ( G_n, \E[p^n] ) $ is trivial. \end{thm}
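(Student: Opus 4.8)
The plan is to reduce the statement to an analysis of the local cohomology under the remaining constraint $k(\E[p])=k(\zeta_p)$, which forces the Galois image to be as small as possible. By Remark \ref{ipotesi}, the hypotheses $H^1(G_n,\E[p^n])\neq 0$, together with the presence of a $k$-rational point of exact order $p$ and the fact that $k$ does not contain $\Q(\zeta_p+\overline\zeta_p)$, allow us to fix a basis $\{Q_1,Q_2\}$ of $\E[p^n]$ in which $\rho_n$ is diagonal with first eigenvalue $1$ and second eigenvalue $\lambda_{2,n}$, and moreover $\rho$ has order $\geq 3$. Since now $K_1=k(\E[p])=k(\zeta_p)$, the Weil pairing identity $\tau(\zeta_p)=\zeta_p^{\det\tau}$ shows that $\det\colon G_1\to(\Z/p\Z)^\ast$ is injective; as $\rho$ has first eigenvalue $1$, $\det\rho=\lambda_2$, so $G_1$ is forced to be exactly $\langle\rho\rangle$ (in particular $\sigma\notin G_1$, so $G_1$ is the cyclic diagonal group of order equal to that of $\lambda_2$). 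This is the key rigidity input: it pins down $G_1$ completely, and hence constrains $G_n$ to lie between $\langle\rho_n\rangle$ and the full preimage of $\langle\rho\rangle$, i.e. $G_n$ is generated (by part (1) of Lemma \ref{lem:lem11}) by $\rho_n$ together with generators $\tau_L,\tau_U$ of ${\mathpzc{sL}}_n$ and ${\mathpzc{sU}}_n$.

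Next I would bring in Proposition \ref{pro15} (whose statement we are entitled to use), which is precisely designed to handle this ``$k(\E[p])=k(\zeta_p)$'' diagonal-cyclic situation: it should say that under these hypotheses either the local cohomology $H^1_{\rm loc}(G_n,\E[p^n])$ vanishes, or else one can extract from a nonzero local cocycle a chain of $\rho_n$-stable cyclic subgroups of $\E[p^n]$ that assembles into a cyclic $k$-isogeny of degree $p^3$ between curves $k$-isogenous to $\E$. Concretely, the strategy is: assume for contradiction that $H^1_{\rm loc}(G_n,\E[p^n])\neq 0$, take a nonzero class $[Z]$ in it, and use part (2) of Lemma \ref{lem:lem11} (which kills the local cohomology of $\langle\rho_n,{\mathpzc{sL}}_n\rangle$ and of $\langle\rho_n,{\mathpzc{sU}}_n\rangle$) together with the cocycle relations on the generators $\tau_L$ and $\tau_U$ to show that the ``defect'' of $[Z]$ is concentrated in a specific subquotient of $\E[p^n]$ of exponent bounded by $p^3$. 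The eigenvalue-$1$ direction $\langle Q_1\rangle$ is $G_n$-stable (it is the rational point), and one tracks how the upper/lower unipotent parts move $Q_1$ and $Q_2$ modulo successive powers of $p$; the obstruction to triviality produces a $G_n$-stable cyclic subgroup $C\subset\E[p^n]$ with $|C|=p^3$, equivalently a rational cyclic isogeny $\E\to\E/C$ of degree $p^3$ (after replacing $\E$ by a $k$-isogenous curve to align with the given basis). This contradicts the last hypothesis, so $H^1_{\rm loc}(G_n,\E[p^n])=0$.

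The main obstacle will be the bookkeeping in that last step: extracting the cyclic $p^3$-isogeny from a nonzero local cocycle requires carefully combining the vanishing statements of Lemma \ref{lem:lem11}(2) for the two ``Borel halves'' $\langle\rho_n,{\mathpzc{sL}}_n\rangle$ and $\langle\rho_n,{\mathpzc{sU}}_n\rangle$ — a cocycle that restricts trivially on both still need not be trivial on the full $G_n$, and the residual obstruction lives in a group-cohomological term measuring the failure of $G_n$ to be generated by those two subgroups ``freely''. One has to show that this residual obstruction, when nonzero, has order exactly dividing $p^3$ and that the corresponding line in $\E[p^n]$ is Galois-stable; controlling that the order is not larger (which would merely give a higher-degree isogeny, not the degree $p^3$ we want) is exactly where the hypothesis ``$k$ does not contain $\Q(\zeta_p+\overline\zeta_p)$'', equivalently ``$\rho$ has order $\geq 3$'', is used — it forces enough $\rho_n$-equivariance to collapse the relevant subquotient down to exponent $p^3$. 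I would isolate this as the technical heart, presumably discharged by the detailed cohomology-of-the-diagonal computations and the construction of special local cocycles announced for sections \ref{sec4} and \ref{sec5}, and otherwise the theorem follows formally by contradiction from Proposition \ref{pro15}.
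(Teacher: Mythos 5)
Your setup is fine (Remark \ref{ipotesi} gives the diagonal $\rho_n$ with first eigenvalue $1$ and order $\geq 3$, and your Weil-pairing argument that $K_1=k(\zeta_p)$ forces $G_1=\langle\rho\rangle$ is a valid alternative to the paper's appeal to Proposition \ref{pro14}), but the core of the proof is missing. First, you misquote Proposition \ref{pro15}: it does not offer a dichotomy ``local cohomology vanishes or a cyclic $p^3$-isogeny appears''; it only says that if $H^1_{\rm loc}(G_n,\E[p^n])\neq 0$ (with $\rho$ of order $\geq 3$ and first eigenvalue $1$) then $G_2$ is contained in the upper or lower triangular matrices. The entire content of the paper's proof of Theorem \ref{cinque} is the bridge you skip: from the triangularity of $G_2$ one gets a cyclic $G_2$-stable subgroup $C_1\subset\E[p^2]$ of order $p^2$; from $G_1=\langle\rho\rangle$ diagonal one gets a second, independent $G_1$-stable subgroup $C_2\subset\E[p]$ of order $p$ with $C_1\cap C_2=\{0\}$; taking the $k$-isogenies $\phi_i\colon\E\to\E_i$ with kernels $C_i$, the composition $\phi=\phi_1\circ\widehat{\phi_2}\colon\E_2\to\E_1$ has degree $p^3$, and one checks it is cyclic by showing $\E_2[p]\not\subseteq\ker\phi$ (using $C_1\cap C_2=\{0\}$ and the description of $\ker\phi$). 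This contradicts the last hypothesis. None of this is in your proposal.

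Your substitute mechanism does not work as stated: you claim the cocycle obstruction yields a $G_n$-stable cyclic subgroup $C\subset\E[p^n]$ of order $p^3$ ``after replacing $\E$ by a $k$-isogenous curve'', but no argument produces such a subgroup directly — the group-theoretic information available is only triangularity of $G_2$ (order $p^2$), and the passage to a degree-$p^3$ cyclic isogeny genuinely requires the dual-isogeny composition above, which changes both the source and the target curve; deferring it to ``the technical heart, presumably discharged by sections \ref{sec4} and \ref{sec5}'' is not legitimate, since those sections only provide the cocycle structure used to prove Propositions \ref{pro14} and \ref{pro15}. Two further inaccuracies: $\langle Q_1\rangle$ is \emph{not} $G_n$-stable in general (a strictly lower triangular $\tau_L$ sends $Q_1$ to $Q_1+p^jQ_2$), and your account of where ``$\rho$ of order $\geq 3$'' enters (to cap the obstruction at exponent $p^3$) does not correspond to any step that can be carried out; that hypothesis is used only to invoke Lemma \ref{lem:lem11} and Propositions \ref{pro14}--\ref{pro15}. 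So the proposal has a genuine gap at precisely the step the theorem is about.
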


\begin{proof}
Suppose on the contrary that $H^1_{{\rm loc}} ( G_n, \E[p^n] ) \not= 0$.  Then by Remark \ref{ipotesi} and Proposition \ref{pro15} all the elements of $G_2$ are either lower triangular or upper triangular.
Then $\E[p^2]$ has a cyclic $G_2$-submodule $C_1$ of order $p^2$.
In addition, by  Proposition \ref{pro14}, $G_1$ is cyclic, generated by the diagonal matrix $\rho$.
Thus  $\E[p]$ has a $G_1$-submodule $C_2$ of order $p$ such that $C_1 \cap C_2 = \{ 0 \}$.
By~\cite[Remark III 4.13.2]{Sil} there exist elliptic curves $\E_1, \E_2$ defined over $k$ and cyclic $k$-isogenies $\phi_i \colon \E \rightarrow \E_i$ with $\ker ( \phi_i ) = C_i$
and  $\deg ( \phi_i ) = \vert \ker ( \phi_i ) \vert$. Then  the isogeny
\[
\phi = \phi_1 \circ \widehat{\phi_2} \colon \E_2 \rightarrow \E_1
\]
(where, as usual, $\widehat{\phi_2}$ is the dual isogeny of $\phi_2$) has degree $p^3$.
Let us show that $\phi$ is cyclic.
Since $\ker ( \phi ) \subseteq \E_2[p^3] \cong \Z / p^3 \Z \times \Z / p^3 \Z$, it is sufficient to prove that $\E_2[p] \not \subseteq \ker ( \phi )$.
Recall that
\begin{equation}\label{rel1}
\ker ( \phi ) = \{ P \in \E_2 \ {\rm s.t.} \ \widehat{\phi_2} ( P ) \in \ker ( \phi_1 ) \}.
\end{equation}
As $|\ker ( \widehat{\phi_2} )|=p$, there exists $P \in \E_2[p]$  such that $\widehat{\phi_2} ( P ) \neq 0$. Then
 $\phi_2 \circ \widehat{\phi_2} ( P ) = p P = 0$ and  $\widehat{\phi_2} ( P ) \in \ker ( \phi_2 ) = C_2$.
Since $C_1 \cap C_2 = \{ 0 \}$, $\widehat{\phi_2} ( P ) \not \in \ker ( \phi_1 )$.
By relation~(\ref{rel1}), we get $P \not \in \ker ( \phi )$.
Then $\E_1[p] \not \subseteq \ker ( \phi )$ and $\phi$ is cyclic of order $p^3$ contradicting the hypothesis. So the local cohomology must be trivial.

\end{proof}

\begin{cor}
For any elliptic curve $\E$ over the  rationals, there are no counterexample to the local-global divisibility by power of $5$.
\end{cor}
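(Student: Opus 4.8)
The plan is to argue exactly as in the proof of the preceding corollary for powers of $7$, with Kenku's theorem playing the role that Merel's theorem played there. First, if the global cohomology $H^1 ( G_n, \E[5^n] )$ is trivial then so is the local cohomology, and the Cohomological Criterion already gives the conclusion; hence we may assume $H^1 ( G_n, \E[5^n] ) \neq 0$. Since $[ \Q ( \zeta_5 + \overline{\zeta_5} ) : \Q ] = 2$, the field $\Q$ does not contain $\Q ( \zeta_5 + \overline{\zeta_5} )$. If $\E$ has no rational point of exact order $5$, then Theorem \ref{uno} yields that the local-global principle holds for $5^n$ and we are done; so from now on assume that $\E$ has a rational point of exact order $5$.

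Next I split into two cases according to whether $\Q ( \E[5] )$ equals $\Q ( \zeta_5 )$ or not. If $\Q ( \E[5] ) \neq \Q ( \zeta_5 )$, then all the hypotheses of Theorem \ref{sette} hold (with $k = \Q$, $p = 5$), so $H^1_{{\rm loc}} ( G_n, \E[5^n] ) = 0$, and the Cohomological Criterion finishes this case.

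It remains to treat the case $\Q ( \E[5] ) = \Q ( \zeta_5 )$, and here I invoke Theorem \ref{cinque}. Besides the hypotheses already secured, Theorem \ref{cinque} requires that there be no cyclic $\Q$-isogeny of degree $5^3 = 125$ between two elliptic curves defined over $\Q$ and $\Q$-isogenous to $\E$. Kenku's theorem \cite{Ken} asserts that the modular curve $Y_0 ( 125 )$ has no rational points, i.e.\ there is no rational cyclic isogeny of degree $125$ between any two elliptic curves defined over $\Q$; in particular there is none between curves $\Q$-isogenous to $\E$. Thus Theorem \ref{cinque} applies and gives $H^1_{{\rm loc}} ( G_n, \E[5^n] ) = 0$, and once more the Cohomological Criterion yields the claim.

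The argument is entirely a bookkeeping combination of Theorems \ref{uno}, \ref{sette} and \ref{cinque} with Kenku's theorem and the Cohomological Criterion, so there is no serious obstacle. The only point requiring a moment's care is checking that the external input is exactly what hypothesis (3) of Theorem \ref{cinque} demands in the remaining case; this is immediate, since Kenku's statement concerns all elliptic curves over $\Q$ and hence a fortiori those $\Q$-isogenous to $\E$. No new computation is needed.
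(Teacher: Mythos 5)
Your argument is correct and follows essentially the same route as the paper: reduce to the case of nontrivial global cohomology with a rational point of exact order $5$, then apply Theorem \ref{sette} when $\Q(\E[5]) \neq \Q(\zeta_5)$ and Theorem \ref{cinque} together with Kenku's theorem on $Y_0(125)$ when $\Q(\E[5]) = \Q(\zeta_5)$, concluding with the Cohomological Criterion in each case. No gaps.
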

\begin{proof} If the global cohomology is trivial then also the local cohomology is trivial and by the Cohomological Criterion there are no counterexamples. We can then assume  $H^1 ( G_5, \E[5^n] ) \neq 0$. Of course $\Q$ does not contain the element $\zeta_5+ \overline{\zeta}_5$. If there are no rational points  of exact order $5$, then we can apply Theorem \ref{uno}. So we can suppose that  there is  a rational point  of exact order $5$. If  $\Q ( \E[5] ) \neq \Q ( \zeta_5 )$ we apply  Theorem \ref{sette} and the Cohomological Criterion to conclude the proof. So we can assume that $\Q ( \E[5] ) = \Q ( \zeta_5 )$.   Kenku \cite{Ken}, proved that  the modular curve $Y_0 ( 125 )$ has no rational points. In other words,
 there are no rational cyclic isogeny of degree $125$ between elliptic curves defined over $\Q$. So the hypothesis of Theorem \ref{cinque} are satisfied. Then the local cohomology is trivial and by the Cohomological Criterion there are no counterexamples.
\end{proof}

   {\bf Conclusion of the Proof of Theorem \ref{teo11}.}
   If condition 1. holds, then by Theorem \ref{uno}  there are no counterexamples.
   Assume that condition 1. does not hold and condition 2. holds. Then there is a $k$-rational point of  exact prime order $p$ and $K_1 \not= k ( \zeta_p )$. By Theorem    \ref{sette} the local cohomology is trivial.
   Assume that condition 1. and 2. do not hold and condition 3. holds. Then  there is a $k$-rational points of  exact prime order $p$ and $K_1 = k ( \zeta_p )$. By Theorem    \ref{cinque} the local cohomology is trivial.

   Apply the Cohomological Criterion   to conclude that the  local-global divisibility by $p^n$ holds.

\section{The triviality of the Local Cohomology for Diagonal Matrices.}
\label{sec4}
In this section we prove that for the the diagonal group $H^1_{{\rm loc}} ( {\mathpzc{D}}_n, \E[p^n] ) = 0$. We recall two immediate properties on the local cohomology of the direct sum and normal subgroups. For convenience  we present here a proof. In a final version we would prefer to let it as an exercise.

\begin{lem}\label{lem11}
Let $ \Gamma $ be a group and let $M, N$ be $\Gamma$-modules.
Let $\pi_M$ (respectively $\pi_N$) be the projection from $ M \bigoplus N $ to $M$ (respectively to $N$).
The isomorphism $\phi \colon H^1 ( \Gamma, M \bigoplus N ) \rightarrow H^1 ( \Gamma, M ) \bigoplus H^1 ( \Gamma, N )$ that sends
$[Z] \in H^1 ( \Gamma, M \bigoplus N )$ to $( [\pi_M \circ Z], [\pi_N \circ Z] )$, induces an isomorphism
\[
\phi_{{\rm loc}} \colon H^1_{{\rm loc}} ( \Gamma, M \bigoplus N ) \rightarrow H^1_{{\rm loc}} ( \Gamma, M ) \bigoplus H^1_{{\rm loc}} ( \Gamma, N ).
\]
\end{lem}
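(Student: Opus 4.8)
The plan is to verify that the standard isomorphism $\phi$ on ordinary cohomology is compatible with all restriction maps, so that it carries the intersection of kernels on the source to the intersection of kernels on the target. First I would recall why $\phi$ is an isomorphism at the level of $H^1$: a cocycle $Z\colon \Gamma \to M\bigoplus N$ is the same as a pair of cocycles $(\pi_M\circ Z, \pi_N\circ Z)$ since the $\Gamma$-action on $M\bigoplus N$ is diagonal and the projections $\pi_M,\pi_N$ are $\Gamma$-module homomorphisms; coboundaries go to pairs of coboundaries for the same reason, using that $\pi_M,\pi_N$ are split surjections with section given by the inclusions $\iota_M,\iota_N$. So $\phi$ is a well-defined isomorphism of abelian groups, functorial in $\Gamma$.

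Next I would set up the key naturality square. For every subgroup $C\le\Gamma$ (in particular every cyclic one) the restriction maps fit into a commutative diagram
\[
\begin{CD}
H^1(\Gamma, M\bigoplus N) @>{\phi}>> H^1(\Gamma,M)\bigoplus H^1(\Gamma,N)\\
@V{\mathrm{res}_C}VV @VV{\mathrm{res}_C\,\oplus\,\mathrm{res}_C}V\\
H^1(C, M\bigoplus N) @>{\phi_C}>> H^1(C,M)\bigoplus H^1(C,N),
\end{CD}
\]
whose commutativity is immediate because restriction is just "restrict the cocycle to $C$" and commutes with post-composition by the $\Gamma$-homomorphisms $\pi_M,\pi_N$. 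Since $\phi_C$ is itself an isomorphism (same argument as for $\phi$, applied to $C$), the diagram shows that $[Z]\in\Ker(\mathrm{res}_C)$ if and only if $\phi([Z])\in\Ker(\mathrm{res}_C\oplus\mathrm{res}_C)=\Ker(\mathrm{res}_C)\bigoplus\Ker(\mathrm{res}_C)$.

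Finally I would intersect over all cyclic $C\le\Gamma$. By definition $H^1_{\mathrm{loc}}(\Gamma,M\bigoplus N)=\bigcap_C \Ker(\mathrm{res}_C)$, and applying the equivalence from the previous step for each $C$ gives
\[
\phi\big(H^1_{\mathrm{loc}}(\Gamma, M\bigoplus N)\big)=\bigcap_C\Big(\Ker(\mathrm{res}_C)\bigoplus\Ker(\mathrm{res}_C)\Big)=\Big(\bigcap_C\Ker(\mathrm{res}_C)\Big)\bigoplus\Big(\bigcap_C\Ker(\mathrm{res}_C)\Big),
\]
which is exactly $H^1_{\mathrm{loc}}(\Gamma,M)\bigoplus H^1_{\mathrm{loc}}(\Gamma,N)$; here the middle equality uses that intersection distributes over the direct-sum decomposition of the product group $H^1(\Gamma,M)\bigoplus H^1(\Gamma,N)$. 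Restricting the isomorphism $\phi$ to these subgroups yields the desired $\phi_{\mathrm{loc}}$, and it is still injective and now surjective by construction. There is no real obstacle here; the only thing to be careful about is bookkeeping with the splitting maps when checking that coboundaries correspond to coboundaries, and making sure that the cyclic subgroups indexing the two local cohomologies are literally the same family, which they are since both are taken inside the same ambient group $\Gamma$.
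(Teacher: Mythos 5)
Your proposal is correct; the only difference from the paper is one of packaging. The paper argues directly at the level of cocycles, using the pointwise form of the local conditions: if $[Z]$ is local, then for each $\gamma\in\Gamma$ one has $Z_\gamma=\gamma(m_\gamma,n_\gamma)-(m_\gamma,n_\gamma)$, and projecting this witness shows $\pi_M\circ Z$ and $\pi_N\circ Z$ are local; injectivity is inherited from $\phi$; and surjectivity is proved by explicitly assembling the pair cocycle $\gamma\mapsto(W^M_\gamma,W^N_\gamma)$ with the combined witness $(a_\gamma,b_\gamma)$. You instead stay at the level of cohomology classes: you use naturality of $\phi$ with respect to restriction to each cyclic subgroup $C$, the fact that $\phi_C$ is again an isomorphism, and then distribute the intersection of kernels over the direct sum, so that surjectivity is automatic from bijectivity of $\phi$ together with the kernel characterization. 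Your route adheres more literally to the definition of $H^1_{\rm loc}$ given in the introduction (kernels of restrictions to cyclic subgroups), avoids any witness bookkeeping, and works verbatim with any family of subgroups in place of the cyclic ones; the paper's hands-on computation instead exhibits concrete coboundary witnesses, which is the form in which the local conditions are actually exploited later (as in the computations of Propositions \ref{pro13}, \ref{pro14} and \ref{pro15}). Both arguments rest on the same fact, namely that the direct-sum decomposition is compatible with the cohomology of every subgroup, so either proof would serve.
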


\DIM Let $ Z $ be a cocycle from $\Gamma$ to $M \bigoplus N$, which satisfies the local conditions.
Let us denote $Z^M = \pi_M \circ Z$ and $Z^N = \pi_N \circ Z$.
Then for every $\gamma \in \Gamma$, there exists $( m_\gamma, n_\gamma ) \in M \bigoplus N$ such that
\[
Z_\gamma = \gamma ( m_\gamma, n_\gamma ) - ( m_\gamma, n_\gamma ) = ( \gamma m_\gamma - m_\gamma, \gamma n_\gamma - n_\gamma ).
\]
Thus, for every $ \gamma $, we have $Z^M_\gamma = \gamma m_\gamma - m_\gamma$ and $Z^N_\gamma = \gamma n_\gamma - n_\gamma$.
Therefore $Z^M$ and $Z^N$ satisfy the local conditions, and so $ \phi_{{\rm loc}} $ sends $H^1_{{\rm loc}} ( \Gamma, M \bigoplus N )$ to $H^1_{{\rm loc}} ( \Gamma, M ) \bigoplus H^1_{{\rm loc}} ( \Gamma, N )$.
Since $ \phi $ is injective, also $ \phi_{{\rm loc}} $ is injective.
Let $W^M, W^N$ be cocycles such that $[W^M] \in H^1_{{\rm loc}} ( \Gamma, M )$ and $[W^N] \in H^1_{{\rm loc}} ( \Gamma, N )$.
Then, for every $\gamma \in \Gamma$ there exists $a_\gamma \in M, b_\gamma \in N$, such that $W^M_\gamma = \gamma a_\gamma - a_\gamma$, $W^N = \gamma b_\gamma - b_\gamma$.
Thus also the cocycle $W \colon \Gamma \rightarrow M \bigoplus N$ that sends $\gamma$ to $( W^M_\gamma, W^N_\gamma )$ satisfies the local conditions.
In fact, for every $\gamma \in \Gamma$, we have $W_\gamma = \gamma ( a_\gamma, b_\gamma ) - ( a_\gamma, b_\gamma )$.
Then $[W] \in H^1_{{\rm loc}} ( \Gamma, M \bigoplus N )$.
Since, by definition, $\phi_{{\rm loc}} ( [W] ) = ( [W^M], [W^N] )$, then $\phi_{{\rm loc}}$ is surjective.
\CVD

Let $ \Gamma $ be a group, let $M$ be a $\Gamma$-module and let $\Delta$ be the subgroup of $\Gamma$ that fixes $M$.
In the following lemma we prove that $H^1_{{\rm loc}} ( \Gamma/ \Delta, M ) \cong H^1_{{\rm loc}} ( \Gamma, M )$.

\begin{lem}\label{lemnuovo}
Let $ \Gamma $ be a group and let $M$ be a $\Gamma$-module.
Set $\Delta$ the subgroup of $ \Gamma $ formed by the elements that fix $M$.
Then $H^1_{{\rm loc}} ( \Gamma/ \Delta, M ) \cong H^1_{{\rm loc}} ( \Gamma, M )$ and the isomorphism is induced by the inflation.
\end{lem}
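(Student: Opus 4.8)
The plan is to use the inflation-restriction exact sequence together with the observation that $\Delta$ acts trivially on $M$, so $H^1(\Delta, M) = \Hom(\Delta, M)$, and to track carefully what the local conditions do on both sides. First I would recall that since $\Delta$ fixes $M$, the inflation map $\inf \colon H^1(\Gamma/\Delta, M) \to H^1(\Gamma, M)$ fits into the exact sequence
\[
0 \longrightarrow H^1(\Gamma/\Delta, M) \xrightarrow{\ \inf\ } H^1(\Gamma, M) \xrightarrow{\ \res\ } H^1(\Delta, M)^{\Gamma/\Delta},
\]
so that $\inf$ is injective and its image is exactly the classes whose restriction to $\Delta$ vanishes. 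The key point is that $\Delta$, fixing $M$ pointwise, is itself a cyclic-free situation only in the sense that $\res$ to $\Delta$ is one of the restriction maps (or a sum of them, over cyclic subgroups of $\Delta$) appearing in the definition of $H^1_{\rm loc}(\Gamma, M)$; hence every class in $H^1_{\rm loc}(\Gamma, M)$ automatically restricts to zero on $\Delta$ and therefore lies in the image of $\inf$.

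The main steps, in order, are: \textbf{(1)} Show $\inf$ maps $H^1_{\rm loc}(\Gamma/\Delta, M)$ into $H^1_{\rm loc}(\Gamma, M)$. Given a cocycle $\overline{Z} \colon \Gamma/\Delta \to M$ satisfying the local conditions and a cyclic subgroup $C \leq \Gamma$, one must produce $m \in M$ with $Z_\gamma = \gamma m - m$ for all $\gamma \in C$, where $Z$ is the inflated cocycle. The image $\overline{C}$ of $C$ in $\Gamma/\Delta$ is cyclic, so by hypothesis there is $m$ with $\overline{Z}_{\bar\gamma} = \bar\gamma m - m$; inflating, $Z_\gamma = \gamma m - m$ on $C$ (using that $\gamma$ and its image act identically on $M$). \textbf{(2)} Show $\inf$ is injective on local cohomology: this is immediate since $\inf$ is already injective on $H^1$. \textbf{(3)} Show surjectivity. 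Take $[Z] \in H^1_{\rm loc}(\Gamma, M)$. Applying the local condition to a cyclic subgroup $C \leq \Delta$ gives $m_C \in M$ with $Z_\delta = \delta m_C - m_C = 0$ for all $\delta \in C$ (since $\Delta$ fixes $M$), so $Z|_\Delta \equiv 0$; in particular $Z$ factors through $\Gamma/\Delta$ as a cocycle $\overline{Z}$, and $\inf(\overline{Z}) = Z$. \textbf{(4)} Check $\overline{Z}$ satisfies the local conditions for $\Gamma/\Delta$. Given a cyclic subgroup $\overline{C} \leq \Gamma/\Delta$, lift a generator to some $\gamma \in \Gamma$; then $\langle \gamma \rangle$ is cyclic in $\Gamma$, so there is $m$ with $Z_{\gamma^j} = \gamma^j m - m$ for all $j$, which descends to $\overline{Z}_{\bar\gamma^j} = \bar\gamma^j m - m$, i.e. the local condition holds on $\overline{C} = \langle \bar\gamma \rangle$.

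I expect the only genuinely delicate point to be step (4): a cyclic subgroup of $\Gamma/\Delta$ need not be the image of a cyclic subgroup of $\Gamma$ of the same order, so one must be careful that lifting a \emph{generator} of $\overline{C}$ to a single element $\gamma$ and using cyclicity of $\langle\gamma\rangle$ suffices — which it does, because the local condition only needs to be verified on the cyclic group generated by that chosen lift, and every element of $\overline{C}$ is a power of $\bar\gamma$. Dually, in step (1) one should note that the image of a cyclic subgroup is cyclic, which causes no trouble. Everything else is the standard bookkeeping of the inflation map on cocycles, and I would either cite the inflation-restriction sequence or spell out the cocycle manipulations directly, consistent with the paper's stated preference to eventually leave such verifications as exercises.
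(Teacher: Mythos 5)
Your proof is correct and takes essentially the same route as the paper: the inflation--restriction sequence combined with the fact that $\Delta$ acts trivially on $M$, so that any cocycle representing a class of $H^1_{{\rm loc}} ( \Gamma, M )$ vanishes identically on $\Delta$, descends to a cocycle of $\Gamma/\Delta$ still satisfying the local conditions, while injectivity is inherited from the injectivity of inflation. The only (harmless) difference is that you verify explicitly that inflation carries $H^1_{{\rm loc}} ( \Gamma/\Delta, M )$ into $H^1_{{\rm loc}} ( \Gamma, M )$ (your step (1)), a containment the paper leaves implicit.
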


\DIM First observe that $\Delta$ is normal in $\Gamma$.
Then we have the inflation-restriction sequence
\[
0 \rightarrow H^1 ( \Gamma / \Delta, M ) \rightarrow H^1 ( \Gamma, M ) \rightarrow H^1 ( \Delta, M ).
\]
Since a cocycle satisfying  the local conditions relative to $\Gamma$, it satisfies them to any subgroup of $\Gamma$, the restriction sends $H^1_{{\rm loc}} ( \Gamma, M )$ to $H^1_{{\rm loc}} ( \Delta, M )$.
Observe that $H^1_{{\rm loc}} ( \Delta, M ) = 0$ because $\Delta$ acts trivially over $M$.
Thus $H^1_{{\rm loc}} ( \Gamma, M )$ is contained in the kernel of the restriction, which is the image of $H^1 ( \Gamma / \Delta, M )$ by the inflation.
Then every $[Z] \in H^1_{{\rm loc}} ( \Gamma, M )$ comes from $[Y] \in H^1 ( \Gamma / \Delta, M )$.
We now show that $[Y] \in H^1_{{\rm loc}} ( \Gamma / \Delta, M )$.
If we choose $Y$ such that $Y_{\overline{\gamma}} = Z_\gamma$, for every $\gamma \in \Gamma$ (where $\overline{\gamma}$ is the class of $ \gamma $ modulo $\Delta$), we have that the inflation of $[Y]$ is $[Z]$ and $Y$ satisfies the local conditions.
Thus $[Y] \in H^1_{{\rm loc}} ( \Gamma / \Delta, M )$ and so the inflation sends $H^1_{{\rm loc}} ( \Gamma / \Delta, M )$ onto $H^1_{{\rm loc}} ( \Gamma, M )$.
Since the inflation $H^1 ( \Gamma / \Delta, M ) \rightarrow H^1 ( \Gamma, M )$ is injective, also its restriction to $H^1_{{\rm loc}} ( \Gamma / \Delta, M )$ is injective.
Thus the inflation induces an isomorphism between $H^1_{{\rm loc}}$.
\CVD

As a consequence of Lemma \ref{lem11} and Lemma \ref{lemnuovo}, we get:

\begin{pro}\label{cor12}
The local cohomology  $H^1_{{\rm loc}} ( {\mathpzc{D}}_n, \E[p^n] ) $ is trivial.
\end{pro}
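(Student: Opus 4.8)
The plan is to decompose the problem using the two lemmas just proved. The group ${\mathpzc{D}}_n$ is a group of diagonal matrices acting on $\E[p^n] \cong \Z/p^n\Z \times \Z/p^n\Z$, where the first factor $M = \langle Q_1 \rangle$ and the second factor $N = \langle Q_2 \rangle$ are each stable under the diagonal action. Hence $\E[p^n] = M \oplus N$ as ${\mathpzc{D}}_n$-modules, and Lemma \ref{lem11} gives
\[
H^1_{{\rm loc}} ( {\mathpzc{D}}_n, \E[p^n] ) \cong H^1_{{\rm loc}} ( {\mathpzc{D}}_n, M ) \oplus H^1_{{\rm loc}} ( {\mathpzc{D}}_n, N ).
\]
So it suffices to show each summand vanishes; by symmetry I treat $M$, on which a diagonal matrix $\mathrm{diag}(\lambda_1, \lambda_2)$ acts as multiplication by $\lambda_1$.

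Next I would apply Lemma \ref{lemnuovo} with $\Gamma = {\mathpzc{D}}_n$ and the module $M$: letting $\Delta$ be the subgroup of ${\mathpzc{D}}_n$ acting trivially on $M$ (i.e. the matrices with $\lambda_1 \equiv 1 \bmod p^n$), we get $H^1_{{\rm loc}} ( {\mathpzc{D}}_n, M ) \cong H^1_{{\rm loc}} ( {\mathpzc{D}}_n/\Delta, M )$. The quotient ${\mathpzc{D}}_n/\Delta$ injects, via the first eigenvalue, into $(\Z/p^n\Z)^\ast$, so it is a finite \emph{cyclic} group $C$ acting faithfully on $M \cong \Z/p^n\Z$. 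Thus the whole matter reduces to showing $H^1_{{\rm loc}}(C, \Z/p^n\Z) = 0$ for a cyclic group $C$. But the local cohomology of a cyclic group is by definition the intersection of the kernels of restriction to all cyclic subgroups, and $C$ is itself one of those subgroups, so restriction to $C$ is the identity map on $H^1(C, \Z/p^n\Z)$; its kernel is $0$, hence $H^1_{{\rm loc}}(C, M) = 0$. The same argument applies to $N$, and combining with the two displayed isomorphisms gives $H^1_{{\rm loc}} ( {\mathpzc{D}}_n, \E[p^n] ) = 0$.

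I do not expect a genuine obstacle here: the content is entirely packaged into Lemmas \ref{lem11} and \ref{lemnuovo}, and the only remaining observation—that a cyclic group has trivial local cohomology in \emph{any} module, since it is a cyclic subgroup of itself—is immediate from the definition of $H^1_{{\rm loc}}$. The one point that deserves a line of care is checking that $M$ and $N$ really are ${\mathpzc{D}}_n$-submodules of $\E[p^n]$ with respect to the fixed basis $\{Q_1, Q_2\}$, which is clear because every element of ${\mathpzc{D}}_n$ is diagonal in that basis by construction (section \ref{sec2}). If one prefers to avoid invoking Lemma \ref{lemnuovo}, an equally short alternative is to observe directly that ${\mathpzc{D}}_n$ need not be cyclic but, for the action on $M$, it factors through a cyclic quotient, and a cocycle trivial on every cyclic subgroup of ${\mathpzc{D}}_n$ is in particular trivial on a cyclic subgroup mapping onto that quotient; either route closes the argument.
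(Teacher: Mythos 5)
Your proposal is correct and follows essentially the same route as the paper's proof: the decomposition $\E[p^n]=\langle Q_1\rangle\oplus\langle Q_2\rangle$ via Lemma \ref{lem11}, the passage to the quotient by the subgroup fixing $Q_1$ (resp.\ $Q_2$) via Lemma \ref{lemnuovo}, and the observation that this quotient embeds in $(\Z/p^n\Z)^\ast$, hence is cyclic and has trivial local cohomology. No substantive difference from the paper's argument.
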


\DIM Recall that $\{ Q_1, Q_2 \}$ is the chosen basis of $\E[p^n]$ in which $ \rho_n $ is diagonal.
Then the cyclic groups $\langle Q_1 \rangle$, $\langle Q_2 \rangle$ are ${\mathpzc{D}}_n$-modules.
Since $\E[p^n] \cong \langle Q_1 \rangle \bigoplus \langle Q_2 \rangle$, by Lemma~\ref{lem11}, we have
\[
H^1_{{\rm loc}} ( {\mathpzc{D}}_n, \E[p^n] ) \cong H^1_{{\rm loc}} ( {\mathpzc{D}}_n, \langle Q_1 \rangle ) \bigoplus H^1_{{\rm loc}} ( {\mathpzc{D}}_n, \langle Q_2 \rangle ).
\]
Then  it is sufficient to show that
\[
H^1_{{\rm loc}} ( {\mathpzc{D}}_n, \langle Q_1 \rangle ) = 0\quad {\rm and} \quad \ H^1_{{\rm loc}} ( {\mathpzc{D}}_n, \langle Q_2 \rangle ) = 0.
\]
We prove  $H^1_{{\rm loc}} ( {\mathpzc{D}}_n, \langle Q_1 \rangle ) = 0$. The other case is  similar.
Let
\[
C_1 =
\bigg\{
\left(
\begin{array}{cc}
1 & 0 \\
0 & \mu \\
\end{array}
\right) \in {\mathpzc{D}}_n
\bigg\}
\] be the  group of the elements of ${\mathpzc{D}}_n$  fixing $Q_1$.
By Lemma~\ref{lemnuovo},
\begin{equation}\label{rel11}
H^1_{{\rm loc}} ( {\mathpzc{D}}_n / C_1, \langle Q_1 \rangle ) \cong H^1_{{\rm loc}} ( {\mathpzc{D}}_n, \langle Q_1 \rangle ).
\end{equation}
Observe that  $C_1$ is  the kernel of the homomorphism ${\mathpzc{D}}_n \rightarrow ( \Z / p^n \Z )^\ast $ that sends
\[
\gamma =
\left(
\begin{array}{cc}
a_\gamma & 0 \\
0 & d_\gamma \\
\end{array}
\right) \in  {\mathpzc{D}}_n \mapsto a_\gamma.
\]

Then ${\mathpzc{D}}_n / C_1$ is isomorphic to a subgroup of $( \Z / p^n \Z )^\ast$ and therefore it is cyclic.
Then $H^1_{{\rm loc}} ( {\mathpzc{D}}_n / C_1, \langle Q_1 \rangle ) = 0$.
By relation~(\ref{rel11}), we get $H^1_{{\rm loc}} ( {\mathpzc{D}}_n, \langle Q_1 \rangle ) = 0$.
\CVD

\subsection{Structure of the Local Cohomology.}
\label{sec5}
We give a precise description of the elements of $H^1_{{\rm loc}} ( G_n, \E[p^n] )$.
Such a description is crucial for  the proof of our main Propositions.
\begin{pro}\label{pro13}
Suppose that $H^1 ( G_n, \E[p^n] ) \neq 0$, $\rho$ has order at least $3$ and the first eigenvalue of $\rho$ is equals to $1$.
Let $c \in H^1_{{\rm loc}} ( G_n, \E[p^n] )$.
Then there exists $\beta \in \Z/ p^n\Z$ and a cocycle $Z$ such that $[Z] = c$,
\[
Z_\tau = ( 0, 0 ), \ \forall \tau \in \langle {\mathpzc{D}}_n, {\mathpzc{sU}}_n \rangle
\]
and
\[
Z_{\tau_L} = ( 0, p^j \beta ).
\]
\end{pro}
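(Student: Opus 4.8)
The plan is to normalise a representative cocycle of the class $c$ by successively subtracting coboundaries, exploiting the vanishing results of Lemma~\ref{lem:lem11} and Proposition~\ref{cor12}. First I would use part~(2) of Lemma~\ref{lem:lem11}: since $H^1_{{\rm loc}} ( \langle \rho_n, {\mathpzc{sU}}_n \rangle, \E[p^n] ) = 0$ and the restriction of $c$ to the subgroup $\langle \rho_n, {\mathpzc{sU}}_n \rangle$ still satisfies the local conditions, there is $P \in \E[p^n]$ with $Z_\tau = \tau P - P$ for all $\tau \in \langle \rho_n, {\mathpzc{sU}}_n \rangle$. Replacing $Z$ by $Z - \partial P$ we may assume $Z_\tau = (0,0)$ for every $\tau \in \langle \rho_n, {\mathpzc{sU}}_n \rangle$. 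Note that ${\mathpzc{D}}_n$ need not be contained in $\langle \rho_n, {\mathpzc{sU}}_n \rangle$, so this first step only kills $\rho_n$ and ${\mathpzc{sU}}_n$; handling the full diagonal group is the delicate point and I address it next.

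Next I would show that, after this normalisation, $Z$ already vanishes on all of ${\mathpzc{D}}_n$. Here the hypothesis that the first eigenvalue of $\rho$ is $1$ and that $\rho$ has order $\ge 3$ enters. Restricting the (now partially normalised) $Z$ to ${\mathpzc{D}}_n$ gives a cocycle satisfying the local conditions, hence by Proposition~\ref{cor12} a coboundary: $Z_\delta = \delta R - R$ for all $\delta \in {\mathpzc{D}}_n$, for some $R = (r_1, r_2) \in \E[p^n]$. Evaluating at $\delta = \rho_n = \mathrm{diag}(1, \lambda_{2,n})$ and using $Z_{\rho_n} = (0,0)$ forces $(1 - 1)r_1 = 0$ (vacuous) and $(\lambda_{2,n} - 1) r_2 = 0$; since $\lambda_2 \not\equiv 1 \pmod p$ (as $\rho$ has order $\ge 3$ its second eigenvalue is a nontrivial root of unity mod $p$ — more precisely, $\lambda_{2,n}$ has order $\ge 3$, so $\lambda_{2,n} - 1$ is a unit mod $p$), we get $r_2 = 0$. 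Thus $R = (r_1, 0)$, and writing a general $\delta = \mathrm{diag}(a_\delta, d_\delta) \in {\mathpzc{D}}_n$ we obtain $Z_\delta = ((a_\delta - 1) r_1, 0)$. I then need to kill $r_1$ without disturbing the vanishing on ${\mathpzc{sU}}_n$: subtracting the coboundary $\partial(r_1, 0)$ changes $Z$ on $\langle \rho_n, {\mathpzc{sU}}_n \rangle$ only through how $(r_1,0)$ transforms, but $\sigma$-type matrices ${1 \, p^i \choose 0 \, 1}$ fix $(r_1, 0)$ (the first coordinate is fixed by upper-triangular unipotents), and $\rho_n$ fixes $(r_1,0)$ since its first eigenvalue is $1$; so $\partial(r_1,0)$ is trivial on $\langle \rho_n, {\mathpzc{sU}}_n \rangle$. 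Hence $Z - \partial(r_1, 0)$ vanishes on $\langle {\mathpzc{D}}_n, {\mathpzc{sU}}_n \rangle$, as required for the first displayed equation.

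Finally I would pin down the value on $\tau_L = {1 \, 0 \choose p^j \, 1}$. Write $Z_{\tau_L} = (x, y) \in \Z/p^n\Z \times \Z/p^n\Z$. The local condition at the cyclic group $\langle \tau_L \rangle$ says $Z_{\tau_L}$ is a coboundary for $\langle \tau_L \rangle$, i.e.\ $Z_{\tau_L} = \tau_L S - S$ for some $S = (s_1, s_2)$; since $\tau_L (s_1, s_2) - (s_1,s_2) = (0, p^j s_1)$, this forces the first coordinate $x = 0$, so $Z_{\tau_L} = (0, y)$. It remains to see $p^j \mid y$, i.e.\ $y = p^j \beta$ for some $\beta$; for this I would use the cocycle relation between $\tau_L$ and a suitable element of $\langle {\mathpzc{D}}_n, {\mathpzc{sU}}_n \rangle$ (on which $Z$ now vanishes) together with the commutation/conjugation identities among $\tau_L$, diagonal matrices and $\tau_U$ inside $G_n$ — conjugating $\tau_L$ by a diagonal matrix $\mathrm{diag}(a,d)$ sends it to ${1 \, 0 \choose (d/a) p^j \, 1}$, and the cocycle relation $Z_{\delta \tau_L \delta^{-1}} = \delta Z_{\tau_L}$ (using $Z_\delta = 0$) combined with the fact that, since $K_1 = k(\zeta_p)$ is not assumed here, one reads off a divisibility constraint forcing $p^j \mid y$. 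The main obstacle is precisely this last divisibility: one must carefully use that every element of $G_n$ is a product of diagonal, strictly-upper and strictly-lower matrices (Lemma~\ref{lem:lem11}(1)) and track how the cocycle relation propagates the normalisation through such products so as to constrain $Z_{\tau_L}$ to lie in $p^j \E[p^n]$; setting $\beta$ to be that quotient then gives the statement.
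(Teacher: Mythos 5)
Your first two normalisation steps are correct and essentially reproduce the paper's argument: the paper likewise uses the vanishing of $H^1_{\rm loc}$ on $\langle \rho_n, {\mathpzc{sU}}_n \rangle$ (Lemma \ref{lem:lem11}) and on ${\mathpzc{D}}_n$ (Proposition \ref{cor12}), kills the diagonal part by subtracting a coboundary, and uses $Z_{\rho_n}=(0,0)$ together with the invertibility of $\lambda_{2,n}-1$ to force the relevant vectors into $\ker(\rho_n - I)=\{(\ast,0)\}$. Your check that subtracting $\partial(r_1,0)$ does not disturb the vanishing on $\langle \rho_n, {\mathpzc{sU}}_n\rangle$ is sound (every element there is upper triangular with first diagonal entry $1$, hence fixes $(r_1,0)$), and vanishing on the generating subgroups propagates to all of $\langle {\mathpzc{D}}_n, {\mathpzc{sU}}_n\rangle$ via $Z_{gh}=Z_g+gZ_h$.

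The defect is the last step. You leave the divisibility $p^j \mid y$ as an unresolved ``obstacle'' and sketch a conjugation argument that is never carried out (and the aside about $K_1=k(\zeta_p)$ is irrelevant here); as written, the claim $Z_{\tau_L}=(0,p^j\beta)$ is not established. But the fact you need is already contained in the identity you yourself displayed: the local condition at the cyclic subgroup $\langle \tau_L\rangle$ says precisely that $Z_{\tau_L}$ lies in the image of $\tau_L - I$, i.e.\ $Z_{\tau_L}=\tau_L(s_1,s_2)-(s_1,s_2)=(0,p^j s_1)$, which gives \emph{simultaneously} $x=0$ and $y=p^j s_1$; taking $\beta=s_1$ finishes the proof, with no commutation identities or appeal to the product decomposition of $G_n$ needed. (The paper reaches the same conclusion by a slightly different route: it invokes $H^1_{\rm loc}(\langle\rho_n,{\mathpzc{sL}}_n\rangle,\E[p^n])=0$ from Lemma \ref{lem:lem11} to write $Z_\gamma=\gamma(P)-P$ on that whole subgroup, and $Z_{\rho_n}=(0,0)$ forces $P=(\beta,0)$, whence $Z_{\tau_L}=(0,p^j\beta)$; your cyclic-subgroup shortcut even avoids the strictly-lower-triangular half of that lemma.) So the argument is salvageable verbatim from your own computation, but you must replace the vague final paragraph by this one-line observation.
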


\DIM Consider the restrictions
\[
H^1 ( G_n, \E[p^n] ) \stackrel{r_{SL}}{\rightarrow} H^1 ( \langle \rho_n, {\mathpzc{sL}}_n \rangle, \E[p^n] ),
\]
\[
H^1 ( G_n, \E[p^n] ) \stackrel{r_{SU}}{\rightarrow} H^1 ( \langle \rho_n , {\mathpzc{sU}}_n \rangle , \E[p^n] ),
\]
\[
H^1 ( G_n, \E[p^n] ) \stackrel{r_D}{\rightarrow} H^1 ( {\mathpzc{D}}_n, \E[p^n] ).
\]
Let $Z$ be a cocycle from $G_n$ to $\E[p^n]$, such that its class $[Z] \in H^1_{{\rm loc}} ( G_n, \E[p^n] )$.
If a cocycle satisfies the local conditions relative to $G_n$, then it satisfies them relative to any subgroup of $G_n$.
Thus $r_{SL} ( [Z] ) \in H^1_{{\rm loc}} ( \langle \rho_n , {\mathpzc{sL}}_n \rangle , \E[p^n] )$, $r_{SU} ( [Z] ) \in H^1_{{\rm loc}} ( \langle \rho_n , {\mathpzc{sU}}_n \rangle , \E[p^n] )$ and $r_D ( [Z] ) \in H^1_{{\rm loc}} ( {\mathpzc{D}}_n, \E[p^n] )$.
By Part 2. of Lemma \ref{lem:lem11} and by Proposition~\ref{cor12}, all such local cohomologies are trivial.
Therefore $[Z] \in \ker ( r_{SL} ) \cap \ker ( r_{SU} ) \cap \ker ( r_D )$.
In other words the restrictions of $Z$ to each subgroup $\langle \rho_n , {\mathpzc{SL}}_n \rangle$,  $\langle \rho_n , {\mathpzc{SU}}_n \rangle$ and  ${\mathpzc{D}}_n$ are coboundaries.
Hence, there exist $P, Q, R \in \E[p^n]$, such that
\[
Z_\gamma = \gamma ( P ) - P,  \hspace{0.4cm} \forall \hspace{0.2cm} \gamma \hspace{0.1cm} \in  \hspace{0.1cm} \langle \rho_n, {\mathpzc{SL}}_n \rangle;
\]
\[
Z_\delta = \delta ( Q ) - Q,  \hspace{0.4cm} \forall \hspace{0.2cm} \delta \hspace{0.1cm} \in \hspace{0.1cm}  {\mathpzc{D}}_n;
\]
\[
Z_\theta = \theta ( R ) - R,  \hspace{0.4cm} \forall \hspace{0.2cm} \theta \hspace{0.1cm} \in \hspace{0.1cm} \langle \rho_n, {\mathpzc{SU}}_n \rangle.
\]
Adding to $Z$ the coboundary $Z^\prime_\tau = \tau ( - Q ) - ( - Q )$, we get a new cocycle in the same class of $Z$ in $H^1 ( G_n, \E[p^n] )$, which is trivial over ${\mathpzc{D}}_n$.
Then, without loss of generality, we can suppose $Q = ( 0, 0 )$ and so $Z_\delta = ( 0, 0 )$ for every $\delta \in {\mathpzc{D}}_n$.
Since $\rho_n \in {\mathpzc{sL}}_n \cap {\mathpzc{sU}}_n \cap {\mathpzc{D}}_n$, we have
\[
Z_{\rho_n} = \rho_n ( P ) - P = \rho_n ( R ) - R = \rho_n ( Q ) - Q= ( 0, 0 ).
\]
Thus $P, R \in \ker ( \rho_n - I )$.
Since
\[
\rho_n =
\left(
\begin{array}{cc}
1 & 0 \\
0 & \lambda_{2, n}\\
\end{array}
\right),
\]
\noindent then $P = ( \beta, 0 )$ and $R = ( \beta^\prime, 0 )$, for some $\beta, \beta^\prime \in \Z / p^n \Z$.
By an easy computation
\[
Z_{\tau_U} = \tau_U ( \beta^\prime, 0 ) - ( \beta^\prime , 0 ) = ( 0, 0 )
\]
and
\[
Z_{\tau_L} = \tau_L ( \beta , 0 ) - ( \beta, 0 ) = ( 0, p^j \beta ).
\]
Then $Z_\tau = ( 0, 0 )$ for every $\tau \in \langle {\mathpzc{D}}_n, {\mathpzc{sU}}_n \rangle$ and $Z_{\tau_L} = ( 0, p^j \beta )$, proving the claim.
\CVD

\section{The main Propositions}\label{sec3}

In this section we collect all our results and we prove the central propositions. The proof of   Theorem \ref{sette}  realises  on the  following:
\begin{pro}\label{pro14}
Suppose that $H^1 ( G_n, \E[p^n] ) \neq 0$, the order of $\rho$ is $\geq 3$, $\rho$ has the first eigenvalue equals to $1$ and $G_1$ is not cyclic.
Then the cohomological group $H^1_{{\rm loc}} ( G_n, \E[p^n] ) = 0$.
\end{pro}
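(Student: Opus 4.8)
The plan is to combine the normal form for local cocycles given by Proposition~\ref{pro13} with one explicit relation in $G_n$ built from the unipotent matrix $\sigma$, which is at our disposal exactly because $G_1$ is not cyclic.

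First I would nail down the mod-$p$ picture. Since $G_1$ is not cyclic, Lemma~\ref{lem:lem14} forces $G_1=\langle\rho,\sigma\rangle$, and since by hypothesis $\rho$ has first eigenvalue $1$ and order $\geq 3$ we get $\rho=\bigl(\begin{smallmatrix}1&0\\0&\lambda_2\end{smallmatrix}\bigr)$ with $\lambda_2\not\equiv 1\pmod p$, so $G_1$ is non-abelian. By part~1 of Lemma~\ref{lem:lem11}, $G_n=\langle{\mathpzc{D}}_n,{\mathpzc{sU}}_n,{\mathpzc{sL}}_n\rangle$; since $\tau_L\equiv I\pmod p$ the image of ${\mathpzc{sL}}_n$ in $G_1$ is trivial, so $G_1$ is generated by the images of ${\mathpzc{D}}_n$ and ${\mathpzc{sU}}_n$. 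The first image is diagonal, so if the second were trivial $G_1$ would consist of diagonal matrices, contradicting $\sigma\in G_1$; hence the image of ${\mathpzc{sU}}_n$ is the full unipotent subgroup $\langle\sigma\rangle$ of order $p$. As ${\mathpzc{sU}}_n$ is cyclic, generated by $\tau_U=\bigl(\begin{smallmatrix}1&p^i\\0&1\end{smallmatrix}\bigr)$, this forces $i=0$, i.e.\ $\tau_U=\sigma\in{\mathpzc{sU}}_n$; and since $\sigma$ has order $p^n$ among upper unipotent matrices, ${\mathpzc{sU}}_n$ is the group of \emph{all} upper unipotent matrices in $G_n$. Consequently $\langle{\mathpzc{D}}_n,{\mathpzc{sU}}_n\rangle$ is precisely the set of upper triangular matrices of $G_n$, and by Proposition~\ref{pro13} every class of $H^1_{{\rm loc}}(G_n,\E[p^n])$ is represented by a cocycle $Z$ with $Z_\tau=(0,0)$ for \emph{every} upper triangular $\tau\in G_n$ and $Z_{\tau_L}=(0,p^j\beta)$ for some $\beta\in\Z/p^n\Z$.

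The heart of the proof is then to compute $Z_{\sigma\tau_L\sigma^{-1}}$ in two ways. Directly, using $Z_\sigma=Z_{\sigma^{-1}}=(0,0)$ and the cocycle identity, $Z_{\sigma\tau_L\sigma^{-1}}=\sigma Z_{\tau_L}=(p^j\beta,\,p^j\beta)$. On the other hand a matrix computation gives $\sigma\tau_L\sigma^{-1}=\bigl(\begin{smallmatrix}1+p^j&-p^j\\ p^j&1-p^j\end{smallmatrix}\bigr)$; choosing an integer $s$ with $s(1+p^j)\equiv 1\pmod{p^{n-j}}$ one checks that $v:=\tau_L^{-s}\,\sigma\tau_L\sigma^{-1}$ has lower-left entry $p^j\bigl(1-s(1+p^j)\bigr)\equiv 0\pmod{p^n}$, hence is upper triangular, hence $Z_v=(0,0)$. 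Because $\tau_L$ fixes $Q_2$ it fixes $Z_{\tau_L}$, so $Z_{\tau_L^{s}}=s\,Z_{\tau_L}$, and feeding $\sigma\tau_L\sigma^{-1}=\tau_L^{s}v$ into the cocycle identity gives $Z_{\sigma\tau_L\sigma^{-1}}=Z_{\tau_L^{s}}=(0,\,s\,p^j\beta)$. Comparing the first coordinates of the two expressions yields $p^j\beta\equiv 0\pmod{p^n}$, i.e.\ $Z_{\tau_L}=(0,0)$. Then $Z$ vanishes on ${\mathpzc{D}}_n$, ${\mathpzc{sU}}_n$ and ${\mathpzc{sL}}_n$, which generate $G_n$, so $Z$ is the zero cocycle; hence $H^1_{{\rm loc}}(G_n,\E[p^n])=0$.

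The step I expect to be the main obstacle is the first one: proving that $\sigma$ itself — not just some lift of it — lies in ${\mathpzc{sU}}_n$, and therefore that the cocycle furnished by Proposition~\ref{pro13} vanishes on \emph{all} upper triangular elements of $G_n$. This is exactly what lets the conjugate $\sigma\tau_L\sigma^{-1}$ split as a power of $\tau_L$ times an upper triangular matrix already annihilated by $Z$, which is the mechanism that forces $Z_{\tau_L}=0$. Once that structural input is secured, the rest is the short two-way computation above.
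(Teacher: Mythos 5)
Your proof is correct, and it reaches the conclusion by a genuinely different mechanism than the paper's in the decisive step. Both arguments share the same skeleton: normalize the cocycle via Proposition~\ref{pro13} so that it vanishes on $\langle {\mathpzc{D}}_n,{\mathpzc{sU}}_n \rangle$ and equals $(0,p^j\beta)$ at $\tau_L$, and both need $\tau_U={1\,\,\, 1 \choose 0\,\,\, 1}$; the paper simply quotes Lemma 14 of \cite{P-R-V2} for this, whereas you rederive it from Lemma~\ref{lem:lem14} and part 1 of Lemma~\ref{lem:lem11} (your argument is sound: a group of diagonal matrices mod $p$ has order prime to $p$, while non-cyclicity forces an element of order $p$ in $G_1$, so the image of ${\mathpzc{sU}}_n$ mod $p$ cannot be trivial and $i=0$). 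The difference lies in how $p^j\beta\equiv 0 \pmod{p^n}$ is extracted. The paper constructs the diagonal element $\delta=\tau_U\tau_L\tau_U^{-(p^j+1)^{-1}}\tau_L^{-(p^j+1)}$ and then invokes the \emph{local} solvability condition $(\tau_L\delta\tau_U^b-I)(x,y)=Z_{\tau_L\delta\tau_U^b}$ at the special value $b\equiv -p^j(p^j+1)^{-2}$. You instead use only the cocycle identity on the relation $\sigma\tau_L\sigma^{-1}=\tau_L^{s}v$, where $s(1+p^j)\equiv 1 \pmod{p^{n-j}}$ makes $v$ upper triangular, hence an element of $\langle {\mathpzc{D}}_n,{\mathpzc{sU}}_n\rangle$ annihilated by $Z$, and you compare first coordinates of $\sigma Z_{\tau_L}=(p^j\beta,p^j\beta)$ with $Z_{\tau_L^s v}=(0,sp^j\beta)$; I checked the matrix computations and they are right. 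Your route is more economical and in fact slightly stronger: after Proposition~\ref{pro13} no further appeal to the local conditions is needed, i.e.\ you show that any class whose restrictions to $\langle\rho_n,{\mathpzc{sL}}_n\rangle$, $\langle\rho_n,{\mathpzc{sU}}_n\rangle$ and ${\mathpzc{D}}_n$ vanish is already trivial in $H^1(G_n,\E[p^n])$ when $G_1$ is not cyclic. What the paper's method buys is uniformity: the device of evaluating the local condition at $\tau_L(\mathrm{diagonal})\tau_U^b$ for a well-chosen $b$ is exactly what is reused in Proposition~\ref{pro15}, where a purely group-theoretic shortcut like yours is unavailable, and quoting \cite{P-R-V2} avoids reproving the normal form of $\tau_U$. (A cosmetic remark only: as in the paper, the degenerate case where ${\mathpzc{sL}}_n$ is trivial, or $j\geq n$, is immediate, since then $Z$ already vanishes on a generating set of $G_n$.)
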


\DIM By~\cite[Lemma 14]{P-R-V2}, $\tau_U = {1\,\,\, 1 \choose 0\,\,\, 1}$.
By a simple calculation
\[
\delta = \tau_U \tau_L \tau_U^{-( p^j + 1 )^{-1}} \tau_L^{-( p^j + 1 )}=
\left(
\begin{array}{cc}
1 + p^j  & 0 \\
0 & 1 - p^j ( p^j + 1 )^{-1} \\
\end{array}
\right)
\in G_n.
\]
Let $Z$ be a cocycle such that $[Z] \in H^1_{{\rm loc}} ( G_n, \E[p^n] )$.
By Proposition~\ref{pro13}, we can suppose that $Z_\gamma = ( 0, 0 )$ for every $\gamma \in \langle {\mathpzc{sU}}_n, {\mathpzc{D}}_n \rangle$ and $Z_{\tau_L} = ( 0, p^j \beta )$, for a certain $\beta \in \Z / p^n \Z$.
Then, in particular $Z_{\delta \tau_U^b} = ( 0, 0 )$ for every $b \in \Z / p^n \Z$.
By the cocycle conditions,
\[
Z_{\tau_L \delta \tau_U^b} = Z_{\tau_L} + \tau_L Z_{\delta \tau_U^b} = Z_{\tau_L} = ( 0, p^j \beta ).
\]
We now use the fact that $Z$ satisfies the local conditions, to prove that $p^j \beta \equiv 0 \mod ( p^n )$.
By a short computation
\[
\tau_L \delta  \tau_U^b =
\left(
\begin{array}{cc}
1 + p^j  & ( 1 + p^j ) b \\
p^{2j} + p^j & 1 + ( p^{2j} + p^j ) b - p^j ( p^j + 1 )^{-1} \\
\end{array}
\right)
.
\]
Since $Z$ satisfies the local conditions, for every $b \in \Z / p^n \Z$, there exist $x, y \in \Z / p^n \Z$ such that $( \tau_L \delta \tau_U^b - I ) ( x, y ) = Z_{\tau_L \delta \tau_U^b} = ( 0, p^j \beta )$.
Thus
\[
\left(
\begin{array}{cc}
p^j  & ( 1 + p^j ) b \\
p^{2j} + p^j & ( p^{2j} + p^j ) b - p^j ( p^j + 1 )^{-1} \\
\end{array}
\right)
\left(
\begin{array}{cc}
x  & \\
y & \\
\end{array}
\right)
=
\left(
\begin{array}{cc}
0  &  \\
p^j \beta & \\
\end{array}
\right).
\]
In particular, for $b \equiv -p^j ( p^j + 1 )^{-2} \mod ( p^n )$, we have
\begin{equation*}
\begin{cases}
p^j x -p^j ( p^j + 1 )^{-1} y \equiv 0 \mod ( p^n )\\
( p^j + 1 ) ( p^j x -p^j ( p^j + 1 )^{-1} y ) \equiv p^j \beta \mod ( p^n ).\end{cases}
\end{equation*}

\noindent Whence  $p^j \beta \equiv 0 \mod ( p^n )$.
Thus $Z_{\tau_L} = ( 0, 0 )$, and so $Z_\tau = ( 0, 0 )$ for every $\tau \in \langle {\mathpzc{sU}}_n, {\mathpzc{sL}}_n, {\mathpzc{D}}_n \rangle$, which is $G_n$, by Lemma \ref{lem:lem11}.
Then $Z$ is a coboundary.
\CVD

The proof of   Theorem \ref{cinque}  realises  on the  following:

\begin{pro}\label{pro15}
Suppose that $H^1_{{\rm loc}} ( G_n, \E[p^n] ) \neq 0$, $\rho$ has order at least $3$ and the first eigenvalue of $\rho$ is equals to $1$.
Then $G_2$ is contained either in the group of the lower triangular matrices, or in the group of the upper triangular matrices.
\end{pro}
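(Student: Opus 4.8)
The plan is to argue by contradiction: suppose $G_2$ is contained neither in the lower nor in the upper triangular matrices, and show that then $H^1_{{\rm loc}}(G_n,\E[p^n])=0$. Since $H^1_{{\rm loc}}(G_n,\E[p^n])\neq 0$ implies $H^1(G_n,\E[p^n])\neq 0$, Lemma \ref{lem:lem14} gives $G_1=\langle\rho\rangle$ or $G_1=\langle\rho,\sigma\rangle$; and if $G_1$ were not cyclic, Proposition \ref{pro14} — all of whose hypotheses ($H^1\neq0$, order of $\rho$ at least $3$, first eigenvalue of $\rho$ equal to $1$, $G_1$ not cyclic) would then hold — would force $H^1_{{\rm loc}}(G_n,\E[p^n])=0$, contrary to our assumption. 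Hence $G_1=\langle\rho\rangle$ with $\rho$ diagonal, so ${\mathpzc{sU}}_1={\mathpzc{sL}}_1=\{I\}$ and the generators of Section \ref{sec2} read $\tau_U=\left(\begin{smallmatrix}1&p^i\\0&1\end{smallmatrix}\right)$, $\tau_L=\left(\begin{smallmatrix}1&0\\p^j&1\end{smallmatrix}\right)$ with $i,j\geq1$. (For $n=1$ the hypotheses are never met, so assume $n\geq2$.) I would then pin down $i=j=1$: by Lemma \ref{lem:lem11}(1) we have $G_n=\langle{\mathpzc{D}}_n,{\mathpzc{sU}}_n,{\mathpzc{sL}}_n\rangle$, so $G_2$ is generated by the reductions modulo $p^2$ of ${\mathpzc{D}}_n$, $\tau_U$ and $\tau_L$; if $i\geq2$ then $\tau_U$ reduces to $I$ and $G_2$, being generated by diagonal and strictly lower triangular matrices, is lower triangular, contrary to assumption, and symmetrically $j\geq2$ is impossible.

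With $i=j=1$, exactly as in Proposition \ref{pro14} a direct matrix computation shows that
\[
\delta:=\tau_U\,\tau_L\,\tau_U^{-(1+p^2)^{-1}}\,\tau_L^{-(1+p^2)}=\begin{pmatrix}1+p^2&0\\0&(1+p^2)^{-1}\end{pmatrix}\in G_n .
\]
By Proposition \ref{pro13} I would fix a cocycle $Z$ representing a nonzero class of $H^1_{{\rm loc}}(G_n,\E[p^n])$ with $Z_\tau=(0,0)$ for all $\tau\in\langle{\mathpzc{D}}_n,{\mathpzc{sU}}_n\rangle$ — in particular $Z_{\rho_n}=Z_\delta=Z_{\tau_U}=(0,0)$ — and $Z_{\tau_L}=(0,p\beta)$. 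Because $[Z]\neq 0$ while $G_n=\langle{\mathpzc{D}}_n,{\mathpzc{sU}}_n,{\mathpzc{sL}}_n\rangle$ and ${\mathpzc{sL}}_n=\langle\tau_L\rangle$, we have $p\beta\not\equiv 0\pmod{p^n}$.

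It remains to produce an element $g\in G_n$ whose local condition cannot be met: the cocycle relation gives $Z_{\tau_L^{a}\gamma}=(0,ap\beta)$ for all $\gamma\in\langle{\mathpzc{D}}_n,{\mathpzc{sU}}_n\rangle$, and one wants to pick $g$ among the products $\tau_L^{a}\delta^{c}\rho_n^{d}\tau_U^{b}$ so that $g-I$ has its two rows proportional by a \emph{unit} with the row feeding into the first coordinate forced to vanish on every pair $(x,y)$ — this is exactly the mechanism by which $\tau_L\delta\tau_U^{b}$ works in the proof of Proposition \ref{pro14}. Carrying this out yields $p\beta\equiv0\pmod{p^n}$, contradicting the previous paragraph; hence $G_2$ is triangular, which is the assertion.

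The delicate point, and the step I expect to be the real obstacle, is this last one. In Proposition \ref{pro14} the entry $1$ of $\tau_U=\sigma$ has $p$-adic valuation $0$, which matches $v_p\bigl((\tau_L\delta\tau_U^{b}-I)_{11}\bigr)=j=v_p\bigl((\tau_L\delta\tau_U^{b}-I)_{21}\bigr)$ and makes the two rows proportional by the unit $p^{j}+1$; here all off-diagonal entries of $\tau_U,\tau_L$ have valuation $1$ whereas $\delta$ only satisfies $v_p(\delta_{11}-1)=2$, so $\tau_L\delta\tau_U^{b}$ alone no longer produces a unit ratio and the right $g$ must be assembled with more care. I would expect to split according to whether ${\mathpzc{D}}_n$ contains a diagonal matrix whose first eigenvalue is $\equiv 1\pmod p$ but $\not\equiv 1\pmod{p^{2}}$ — in which case the Proposition \ref{pro14} computation applies essentially verbatim — from the complementary case, in which every element of $G_2$ has $(1,1)$-entry $\equiv 1\pmod{p^{2}}$ and a separate argument (exploiting how restrictive this is, together with the Weil-pairing description of $\det$) is needed.
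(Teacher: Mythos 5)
Your setup follows the paper up to the crucial point: the reduction to $G_1$ cyclic via Proposition \ref{pro14}, the observation that non-triangularity of $G_2$ forces $i=j=1$ (your reduction-mod-$p^2$ argument is a clean variant of the paper's lifting argument), the commutator computation producing the diagonal element $\operatorname{diag}(1+p^2,(1+p^2)^{-1})\in G_n$, and the normal form from Proposition \ref{pro13} are all sound. The genuine gap is exactly the step you flag as ``the real obstacle'', and moreover the target you set for it is the wrong one. You aim to show that the local conditions force $p\beta\equiv 0 \pmod{p^n}$, i.e.\ that the chosen representative $Z$ vanishes identically; this cannot be done in the case you yourself isolate, where every matrix of ${\mathpzc{D}}_n$ has first entry $\equiv 1 \pmod{p^2}$. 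Indeed, in that situation the coboundary of $(\beta,0)$ with $v_p(\beta)=n-2$ is a cocycle in the Proposition \ref{pro13} normal form, satisfies every local condition (being a coboundary), and yet has $p\beta\not\equiv 0\pmod{p^n}$; so no choice of $g$ can yield $p\beta\equiv 0$, and your framework ``$[Z]\neq 0\Rightarrow p\beta\neq 0$, now derive $p\beta=0$'' cannot close. The contradiction has to be obtained at the level of the class, not of the representative.

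The paper's proof does precisely this. It takes $h\geq 1$ minimal such that ${\mathpzc{D}}_n$ contains a matrix $\delta$ with first entry $1+p^h a$, $a$ a unit (your commutator element shows $h\leq 2$), normalizes $\delta$ so that $a=1$ and the second entry is $1+pd$, and applies the local condition to $\tau_L^{p^{h-1}}\delta\tau_U^{b}$ with the specific choice $b\equiv d(p^h+1)^{-1}$; the two resulting congruences are proportional by the unit $p^h+1$ and give only $p^h\beta\equiv 0\pmod{p^n}$, not $p\beta\equiv 0$. The conclusion is then that, by minimality of $h$, every $\delta'\in{\mathpzc{D}}_n$ satisfies $(\delta'-I)(\beta,0)=0$, so $Z_\tau=\tau(\beta,0)-(\beta,0)$ on $\langle \tau_U,{\mathpzc{D}}_n,\tau_L\rangle=G_n$, i.e.\ $Z$ is the coboundary of $(\beta,0)$ and $[Z]=0$, contradicting $H^1_{\rm loc}(G_n,\E[p^n])\neq 0$. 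So your ``hard case'' is handled not by a separate determinant/Weil-pairing argument but by this change of target (triviality of the class rather than of the cocycle), which is the missing idea in your proposal.
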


\DIM By Proposition~\ref{pro14} we can suppose that $G_1$ is cyclic, generated by $\rho$.
Recall that  $G_2 = \langle \rho_2 ,  \Gal ( K_2 / K_1 ) \rangle$, $\rho_2$ is diagonal and all the elements of $\Gal ( K_2/ K_1 )$ are congruent  to the identity $I$ modulo $p$. Suppose that $G_2$ is contained neither in the group of the upper triangular matrices nor in the group of the lower triangular matrices.
 Then there exist $a, b, c, d \in \Z / p \Z$ with $b, c \in \Z/ p\Z^\ast$ such that ${1 + p a\,\,\, p b \choose p c\,\,\, 1 + p d} \in G_2$.
Since $G_2$ is generated by strictly upper and lower triangular and diagonal matrices,  $ {1 \,\,\, p \choose 0\,\,\, 1 }\in G_2$ and $ {1 \,\,\, 0 \choose p \,\,\, 1 } \in G_2$.
We now prove that $\tau_U = {1 \,\,\, p \choose 0\,\,\, 1 }$. Similarly one proves   $\tau_L = {1 \,\,\, 0 \choose p \,\,\, 1 }$.
Let $\tau_n$ be a lift of ${1 \,\,\, p \choose 0\,\,\, 1 }$ to $G_n$.
By Lemma~\ref{lem:lem11} 1., $\tau_n$ decomposes as a product of diagonal, strictly upper triangular and strictly lower triangular matrices.
Since $\tau_n$ reduces to a strictly upper triangular matrix modulo $p^2$, at least one of its factors is of the type
\[
\gamma =
\left(
\begin{array}{cc}
1 & p e \\
0 & 1 \\
\end{array}
\right) \,\,\,\,{\rm with}\,\,\,\,  e \not\equiv 0 \mod ( p ).
\]
Then $\gamma^{e^{-1}} = {1 \,\,\, p \choose 0\,\,\, 1 } \in G_n$.

Let $h \geq 1$ be the  minimal natural number such that there exists a matrix
\[
\delta =
\left(
\begin{array}{cc}
1 + p^h a & 0 \\
0 & \mu + p d \\
\end{array}
\right)  \in {\mathpzc{D}}_n
\]
with $a \in \Z / p^{n-h}\Z^\ast$, $d \in \Z / p^{n-1} \Z$ and $\mu$ congruent to a power of $\lambda_2$ modulo $p$.
By eventually  replacing $\delta$ with a suitable power of it times a suitable power of $\rho_n$, we can suppose $a = 1$ and $\mu = 1$.
Let $Z$ be a cocycle such that $[Z] \in H^1_{{\rm loc}} ( G_n, \E[p^n] )$.
We now compute $Z_{\tau_L^{p^{h-1}} \delta \tau_U^b}$, for every $b \in \Z / p^{n-1} \Z$.
By Proposition~\ref{pro13}, we can suppose
\[
Z_\gamma = ( 0, 0 ), \ \forall \gamma \in \langle {\mathpzc{D}}_n, {\mathpzc{sU}}_n \rangle
\]
and
\[
Z_{\tau_L} = ( 0, p \beta ),
\]
for a certain $\beta \in \Z / p^n \Z$.
Then
\begin{align*}
Z_{\tau_L^{p^{h-1}} \delta \tau_U^b} & = Z_{\tau_L^{p^{h-1}}} + \tau_L^{p^{h-1}} ( Z_{\delta \tau_U^b} ) \\
& = \tau_L^{p^{h-1}} ( ( \beta, 0 ) ) - ( \beta, 0 ) \\
& = ( 0, p^h \beta ).
\end{align*}
We shall prove that   $[Z]$ is a coboundary. To this purpose we show that  the local conditions imply  $Z_\tau = \tau ( ( \beta, 0 ) ) - ( \beta, 0 )$, for every $\tau \in G_n$.
By the local conditions, for every $b \in \Z / p^{n-1} \Z$, there exist $x, y \in \Z / p^n \Z$, such that $( \tau_L^{p^{h-1}} \delta \tau_U^b - I ) ( ( x, y ) ) = Z_{\tau_L^{p^{h-1}} \delta \tau_U^b} = ( 0, p^h \beta )$.
In other words
\[
\left(
\begin{array}{cc}
p^h  & ( p + p^{h+1} ) b \\
p^h + p^{2h} & p d + ( p^{h+1} + p^{2h+1} ) b \\
\end{array}
\right)
\left(
\begin{array}{cc}
x  & \\
y & \\
\end{array}
\right)
=
\left(
\begin{array}{cc}
0  &  \\
p^h \beta & \\
\end{array}
\right).
\]
For $b \equiv d ( p^h + 1 )^{-1} \mod ( p^{n-1} )$, this gives:
\begin{equation*}
\begin{cases}
p^h x + p d y \equiv 0 \mod ( p^n )
\\
( p^h + 1 ) ( p^h x + p d y ) \equiv p^h \beta \mod ( p^n ).\end{cases}
\end{equation*}

\noindent Whence $p^h \beta \equiv 0 \mod ( p^n )$.
By minimality  of $h$, for every diagonal matrix $\delta^\prime \in {\mathpzc{D}}_n$, we have $( \delta^\prime - I ) ( ( \beta, 0 ) ) = 0$.
Thus $Z_\tau = \tau ( (\beta, 0 ) ) - ( \beta, 0 )$ over the group $\langle \tau_U, {\mathpzc{D}}_n, \tau_L \rangle$, which is $G_n$, by Lemma~\ref{lem:lem11} 1.
\CVD

\section{An example}
\label{sec6}
In this section we construct an example which shows that the assumption that $k$ does not contain $\Q ( \zeta_p + \overline{\zeta_p} )$ in our Main Theorem is necessary. The example is inspired by \cite{DZ} example 3.4 page 327. However their example only shows that the weaker condition  that $k$ does not contain $\Q ( \zeta_p )$ is necessary.
Let $p$ be an odd prime number.
Consider the following subgroup of $\GL_2 ( \Z / p^2 \Z )$:
\[
G_2 = \bigg\langle
\delta_1 =
\left(
\begin{array}{cc}
1 & 0 \\
0 & -1 \\
\end{array}
\right),
\delta_2 =
\left(
\begin{array}{cc}
1 + p & 0 \\
0 & 1 + p \\
\end{array}
\right),
\delta_3 =
\left(
\begin{array}{cc}
1 & mp \\
p & 1 \\
\end{array}
\right)
\bigg\rangle
\]
where $m$ is an integer that is not a square modulo $p$.
Observe that $\langle \delta_2, \delta_3 \rangle$ is abelian and it is normal in $G_2$.
Moreover the order of $\delta_1$ is $2$, while $\delta_2$ and $\delta_3$ have order $p$.
Then, for every $g \in G_2$, there exist a unique triple $( a, b, c ) \in \Z/ 2 \Z \times \Z/ p \Z \times \Z / p \Z$, such that $g(a,b,c) = \delta_1^a \delta_2^b \delta_3^c$. By a simple computation,
\[
g ( a, b, c ) =
\left(
\begin{array}{cc}
1 + pb & mpc \\
( -1 )^a p c & ( - 1 )^a ( 1 + pb ) \\
\end{array}
\right).
\]
Moreover, for every two triplets $( a_1, b_1, c_1 ), ( a_2, b_2, c_2 ) \in \Z/ 2 \Z \times \Z/ p \Z \times \Z / p \Z$, we have
\begin{equation}\label{relsomma}
g ( a_1, b_1, c_1 ) g ( a_2, b_2, c_2 ) = g ( a_1 + a_2, b_1 + b_2, ( -1 )^{a_2} c_1 + c_2 ).
\end{equation}

\subsection{The local cohomology.}
\label{sec61}

We now define a map $Z \colon G_2 \rightarrow ( \Z/ p^2 \Z )^2$, by sending $g ( a, b, c )$ to $Z_{g ( a, b, c )} = ( 0, ( -1 )^a p c )$.
We show that $Z$ is a cocycle and its class is a non-trivial element of $H^1_{{\rm loc}} ( G_2, ( \Z / p^2 \Z )^2 )$.
By relation (\ref{relsomma}) we have $Z_{g ( a_1, b_1, c_1 ) g ( a_2, b_2, c_2 )} = ( 0, ( -1 )^{a_1 + a_2} p ( (-1 )^{a_2} c_1 + c_2 ) )$.
On the other hand a short computation shows that
\[
Z_{g ( a_1, b_1, c_1 )} + g ( a_1, b_1, c_1 ) Z_{g ( a_2, b_2, c_2 )} = ( 0, ( -1 )^{a_1 + a_2} p ( (-1 )^{a_2} c_1 + c_2 ) ).
\]
Thus $Z$ is a cocycle.

We now prove that the class of $Z$ is in the local cohomological group. This is equivalent to show that for every $g ( a, b, c ) \in G_2$, there exist $( x, y ) \in \Z / p^2 \Z$ such that
\[
Z_{g ( a, b, c )} = ( g ( a, b, c ) - I ) ( x, y ).
\]
This relation is equivalent to the system
\[
\left(
\begin{array}{cc}
pb  & pcm \\
( -1 )^a p c & ( - 1 )^a ( 1 + b p ) -1 \\
\end{array}
\right)
\left(
\begin{array}{cc}
x  & \\
y & \\
\end{array}
\right)
=
\left(
\begin{array}{cc}
0  &  \\
( -1 )^a pc & \\
\end{array}
\right),
\]
which gives
\begin{equation*}
\begin{cases}
p b x + p c m y\equiv 0 \mod ( p^2 ) \\
( - 1 )^a p c x + ( ( -1 )^a ( 1 + b p ) - 1 ) y \equiv ( - 1 )^a p c \mod ( p^2 ).
\end{cases}
\end{equation*}
Suppose first $a = 1$.
Then the vector $( 0, pc ( pb + 2 )^{-1} )$ is a solution of the system.
Suppose now $a = 0$.
If $c \equiv 0 \mod ( p )$ then  $( 0, 0 )$ is the solution.
If $c \not \equiv 0 \mod ( p )$, a solution of the system is $( c^2m ( c^2m - b^2 )^{-1}, -bc ( c^2m - b^2 )^{-1} )$.
Thus the class of $Z$ is in $H^1_{{\rm loc}} ( G_2, ( \Z / p^2 \Z )^2 )$.
Finally let us show that $Z$ is not a coboundary.
This is equivalent to the fact that the solutions of the system are dependent on the triplets $( a, b, c )$.
For the triplets $( 0, 1, 0 )$, any solution is congruent to the vector $( 0, 0 )$ modulo $p$.
For the triplets $( 0, 0, 1 )$, any solution is congruent to the vector $( 1, 0 )$ modulo $p$.
Then $Z$ is not a coboundary.

\subsection{The example}\label{sec7}

Let $L$ be a number field and let $\E$ be a non $CM$ elliptic curve defined over $L$.
Then by \cite{Ser} there exists a constant $c ( L, \E )$ depending on $L$ and $\E$ such that, for every prime number $p > c ( L, \E )$ and for every positive integer $n$, $\Gal ( L ( \E[p^n] ) / L ) \cong \GL_2 ( \Z / p^n \Z )$.
By Galois correspondence, for every prime $p > c ( L, \E )$ and every subgroup $H_n$ of $\GL_2 ( \Z / p^n \Z )$,  there exists a field $L^\prime$ such that $L \subseteq L^\prime \subseteq L ( \E[p^n] )$ and $\Gal ( L ( \E[p^n] ) / L^\prime ) \cong H_n$.
Since $L ( \E[p^n] ) = L^\prime ( \E[p^n] )$, we get $\Gal ( L^\prime ( \E[p^n] ) / L^\prime ) \cong H_n$.

Let $p > c ( L, \E )$ be a prime and identify $\Gal ( L ( \E[p^n] ) / L )$ with $\GL_2 ( \Z / p^n \Z )$.
Then, in particular, we can find a field $L^\prime$ such that $\Gal ( L^\prime ( \E[p^2] ) / L^\prime ) = G_2$.
Thus the $G_2$-module $\E[p^2]$ is isomorphic to $G_2$-module $( \Z / p^2 \Z )^2$.
By the previous subsection, $H^1_{{\rm loc}} ( {\rm Gal} ( L^\prime ( \E[p^2] / L^\prime ), \E[p^2] )$ is not trivial.
Moreover, by the Cohomological Criterion (see the introduction), there exists a field $k$ containing $L^\prime$ such that $k \cap L^\prime ( \E[p^2] ) = L^\prime$ and there is a counterexample to the local-global divisibility by $p^2$ over $k$.

We now show  that $k$ contains $\Q ( \zeta_p + \overline{\zeta_p} )$, but it does not contain $\Q ( \zeta_p )$.
First observe that, since $k \cap L^\prime ( \E[p^n] ) = L^\prime$, we have $\Gal ( k ( \E[p^n] ) / k ) \cong G_2$.
 By definition of $G_2$ (see the previous subsection), $\Gal ( k ( \E[p] )/ k )$ is a cyclic group of order $2$ generated by the matrix ${1\,\,\, 0 \choose 0 \,\,\, -1}$.
Since $k ( \E[p] )$ contains $\Q ( \zeta_p )$ and the elements of $\Gal ( k ( \E[p] )/ k )$ that fix $\zeta_p$ are exactly the elements with determinant $1$, we have $k ( \E[p] ) = k ( \zeta_p )$.
Thus $[k ( \zeta_p ) : k] = 2$, and so $k$ does not contain $\Q ( \zeta_p )$ and $k$ contains $\Q ( \zeta_p + \overline{\zeta_p} )$.

We finally show that every elliptic curve $k$-isogenous to $\E$ does not admit a cyclic $k$-isogeny of degree $p^3$.
Observe that $\E[p^2]$ has two cyclic distinct $G_2$-modules contained in $\E[p]$ that correspond to the eigenvectors of the eigenvalues of ${1\,\,\, 0 \choose 0 \,\,\, -1}$, but it has no cyclic $G_2$-module of order $p^2$. Suppose that $\E_1, \E_2$ are two elliptic curves in the $k$-isogeny class of $\E$ admitting a cyclic $k$-isogeny $\phi$ of degree $p^3$. Let $\phi_1 \colon \E \rightarrow \E_1$ be a $k$-isogeny. Then the kernel of the composition  $\phi \circ\phi_1$ has a  $G_2$-submodule of degree $p^2$ or $p^3$, which is a contradiction.

\newpage

\thispagestyle{empty}

Laura Paladino\par\smallskip
Dipartimento di Matematica \par
Universit\`{a} della Calabria\par\smallskip
via Ponte Pietro Bucci, cubo 31b\par
IT-87036 Arcavacata di Rende (CS)\par
e-mail address: paladino@mat.unical.it

\vskip 1cm

Gabriele Ranieri\par\smallskip
Scuola Normale Superiore\par
Piazza dei Cavalieri 7,\par\smallskip
56100 Pisa,\par
Italy\par
e-mail address: ranieri@math.unicaen.fr

\vskip 1cm

Evelina Viada\par\smallskip
Departement Mathematik\par
Universit\"{a}t Basel\par\smallskip
Rheinsprung, 21\par
CH-4051 Basel\par
e-mail address: evelina.viada@unibas.ch

\end{document}